\newtheorem{thm}{Theorem}[section]
\newtheorem{lem}[thm]{Lemma}
\theoremstyle{definition}
\newcommand{\scr}[1]{\mathscr #1}
\definecolor{wco}{rgb}{0.5,0.2,0.3}
\numberwithin{equation}{section} \theoremstyle{remark}
\newcommand{\ua}{\uparrow}
\title{{\bf      Wasserstein  Convergence  for Empirical Measures of Subordinated  Diffusions on Riemannian Manifolds }\footnote{Supported in
 part by  NNSFC (11771326, 11831014, 11921001).} }
\author{
{\bf    Feng-Yu Wang$^{a),b)}$ \, \  Bingyao Wu$^{a)}$  }\\
\footnotesize{$^{a)}$ Center for Applied Mathematics, Tianjin University, Tianjin 300072, China }\\
 \footnotesize{ $^{b)}$ Department of Mathematics,
Swansea University, Bay Campus, Swansea, SA1 8EN, United Kingdom}\\
 \footnotesize{  wangfy@tju.edu.cn, F.-Y.Wang@swansea.ac.uk; bingyaowu@163.com } }
\begin{document}
\allowdisplaybreaks
\def\R{\mathbb R}  \def\ff{\frac} \def\ss{\sqrt} \def\B{\mathbf
B}\def\TO{\mathbb T}
\def\I{\mathbb I_{\pp M}}\def\p<{\preceq}
\def\N{\mathbb N} \def\kk{\kappa} \def\m{{\bf m}}
\def\ee{\varepsilon}\def\ddd{D^*}
\def\dd{\delta} \def\DD{\Delta} \def\vv{\varepsilon} \def\rr{\rho}
\def\<{\langle} \def\>{\rangle} \def\GG{\Gamma} \def\gg{\gamma}
  \def\nn{\nabla} \def\pp{\partial} \def\E{\mathbb E}
\def\d{\text{\rm{d}}} \def\bb{\beta} \def\aa{\alpha} \def\D{\scr D}
  \def\si{\sigma} \def\ess{\text{\rm{ess}}}
\def\beg{\begin} \def\beq{\begin{equation}}  \def\F{\scr F}
\def\Ric{{\rm Ric}} \def\Hess{\text{\rm{Hess}}}
\def\e{\text{\rm{e}}} \def\ua{\underline a} \def\OO{\Omega}  \def\oo{\omega}
 \def\tt{\tilde}
\def\cut{\text{\rm{cut}}} \def\P{\mathbb P} \def\ifn{I_n(f^{\bigotimes n})}
\def\C{\scr C}      \def\aaa{\mathbf{r}}     \def\r{r}
\def\gap{\text{\rm{gap}}} \def\prr{\pi_{{\bf m},\varrho}}  \def\r{\mathbf r}
\def\Z{\mathbb Z} \def\vrr{\varrho} \def\ll{\lambda}
\def\L{\scr L}\def\Tt{\tt} \def\TT{\tt}\def\II{\mathbb I}
\def\i{{\rm in}}\def\Sect{{\rm Sect}}  \def\H{\mathbb H}
\def\M{\scr M}\def\Q{\mathbb Q} \def\texto{\text{o}} \def\LL{\Lambda}
\def\Rank{{\rm Rank}} \def\B{\scr B} \def\i{{\rm i}} \def\HR{\hat{\R}^d}
\def\to{\rightarrow}\def\l{\ell}\def\iint{\int}
\def\EE{\scr E}\def\Cut{{\rm Cut}}\def\W{\mathbb W}
\def\A{\scr A} \def\Lip{{\rm Lip}}\def\S{\mathbb S}
\def\BB{\mathbb B}\def\Ent{{\rm Ent}} \def\i{{\rm i}}\def\itparallel{{\it\parallel}}
\def\g{{\mathbf g}}\def\Sect{{\mathcal Sec}}\def\T{\mathcal T}\def\V{{\bf V}}
\def\PP{{\bf P}}\def\HL{{\bf L}}\def\Id{{\rm Id}}\def\f{{\bf f}}\def\cut{{\rm cut}}
\def\Ss{\mathbb S}
\def\BL{\scr A}\def\Pp{\mathbb P}\def\Pp{\mathbb P} \def\Ee{\mathbb E}

\maketitle

\begin{abstract} Let $M$ be a   connected compact Riemannian manifold  possibly with a  boundary $\pp M$, let $V\in C^2(M)$ such that $\mu(\d x):=\e^{V(x)}\d x$ is a probability measure, where $\d x$ is the volume measure, and let
$L=\DD+\nn V$.  As a continuation to  \cite{eWZ} where  convergence  in the quadratic Wasserstein distance  $\W_2$ is atudied for the empirical measures of the $L$-diffusion process (with reflecting boundary if $\pp M\ne\emptyset$),
   this paper   presents  the exact convergence rate for   the subordinated process. In particular,
letting $(\mu_t^{\aa})_{t>0}$ ($\aa\in (0,1))$ be the empirical measures  of  the Markov process generated by
$L^\aa:= -(-L)^\aa$,  when $\pp M$ is  empty or convex we have
$$ \lim_{t\to \infty}  \big\{t \E^x [\W_2(\mu_{t}^\aa,\mu)^2]\big\}= \sum_{i=1}^\infty\ff 2 {\ll_i^{1+\aa}}\ \text{ uniformly\ in\ }  x\in M,$$ where  $\E^x$ is the expectation   for the   process starting at point $x$, $\{\ll_i\}_{i\ge 1}$ are non-trivial  (Neumann) eigenvalues of $-L$.  In general,
 $$\E^x [\W_2(\mu_{t}^\aa,\mu)^2]  \beg{cases} \asymp t^{-1}, &\text{if}\ d<2(1+\aa),\\
 \asymp  t^{-\ff 2 {d-2\aa}}, &\text{if} \ d>2(1+\aa),\\
 \preceq t^{-1}\log (1+t), &\text{if} \ d=2(1+\aa), \text{i.e.}\  \aa=\ff 1 2, d=3\end{cases}$$ holds uniformly in $x\in M$, where in the last case
 $\E^x [\W_1(\mu_{t}^\aa,\mu)^2]\succeq  t^{-1}\log (1+t)$    holds for $M=\TO^3$ and $V=0.$
 \end{abstract} \noindent
 AMS subject Classification:\  60D05, 58J65.   \\
\noindent
 Keywords:  Empirical measure, subordinated diffusion process, Riemannian manifold, Wasserstein distance, eigenvalues.
 \vskip 2cm

\section{Introduction  }

Recently,        sharp convergence rate  in the Wasserstein distance has been derived in \cite{eWZ} for empirical measures of symmetric diffusion processes on compact Riemannian manifolds,   see   \cite{eW1, eW2, eW3, eW4} for further  study of Dirichlet diffusion processes and SDEs/SPDEs, and see
\cite{eAST, BLG14, [25]} and references within for  earlier  results on    i.i.d. random variables and discrete time Markov chains.
 In this paper,
we aim to extend the main results of  \cite{eWZ} to jump processes, for which a natural model is  the subordination  of   diffusion processes.

Let $M$ be a $d$-dimensional    connected  compact Riemannian manifold   possibly with a smooth  boundary $\pp M$.  Let $V\in C^2(M)$ such that $\mu(\d x)=\e^{V(x)} \d x$ is a probability measure on $M$, where $\d x$ is the Riemannian volume measure on $M$. Then  the   (reflecting, if $\pp M\ne \emptyset$) diffusion process $X_t$ generated by $L:=\DD+\nn V$ on $M$ is reversible; i.e. the associated diffusion semigroup $\{ P_t\}_{t\ge 0}$ is symmetric in $L^2(\mu)$, where
 $$ P_tf(x):= \E^x f(X_t),\ \ t\ge 0, f\in \B_b(M). $$
  Here, $\E^x$ is  the expectation  taken for the diffusion process $\{X_t\}_{t\ge 0} $ with
 $X_0=x,$ and we will use $\P^x$ to denote the associated probability measure.
 In general, for  $\nu\in \scr P$ (the set of all probability measures on $M$), let $\E^\nu$ and $\P^\nu$ be the expectation and probability taken for the diffusion process with initial distribution $\nu$.  For any $\nu\in \scr P$ and $t\ge 0$, $\nu P_t:= \P^\nu( X_t \in \cdot)$ is the distribution of $X_t$ with initial distribution $\nu$.

 A function $B\in C^\infty((0,\infty);[0,\infty))\cap C([0,\infty); [0,\infty))$ is called a Bernstein function if
 $$ (-1)^{n-1} \ff{\d^n}{\d r^n}B(r)\ge0,\ \ n\in\mathbb N, r>0.$$
We will use the following classes of  Bernstein functions:
\beg{align*} & {\bf B}:= \big\{B: B\text{\ is\ a\ Bernstein\ function\ with } B(0)=0, B'(0)>0\big\},\\
& \BB:= \bigg\{B\in {\bf B}:\  \int_1^\infty r^{\ff d 2 -1}\e^{-t B(r)}\d r<\infty\ \text{for}\ t>0\bigg\}.\end{align*}
  For each $B\in {\bf  B}$, there exists a unique stable process $S_t^B$ on $[0,\infty)$ with Laplace transform
  \beq\label{LT} \E \e^{-\ll S_t^B}= \e^{-t B(\ll)},\ \ t,\ll \ge 0.\end{equation}
 Moreover, for any $\aa\in [0,1]$, let
 $$\BB^\aa:=\Big\{B\in \BB:\ \liminf_{\ll\to\infty}  \ll^{-\aa} B(\ll)>0\Big\},\ \ \BB_\aa:= \Big\{B\in \BB:\ \limsup_{\ll\to\infty} \ll^{-\aa} B(\ll)<\infty\Big\}.$$

 For any $B\in {\bf B}$, let $X_t^B$ be the Markov process on $M$ generated by $B(L):=-B(-L)$, which can be constructed as the time change (subordination) of  $X_t$:
 $$X_t^B= X_{S^B_t},\ \ t\ge 0,$$
 where $(S^B_t)_{t\ge 0} $ is the stable process satisfying \eqref{LT}  independent of $(X_t)_{t\ge 0}$.
  We consider the empirical measure
 $$\mu_t^B:=\ff 1 t \int_0^t \dd_{X_{s}^B}\d s,\ \ t>0.$$

Let   $\rr$ be the Riemiannian distance   (i.e. the length of shortest curve linking two points) on $M$.
For any $p>0$,  the $L^p$-Wasserstein distance $\W_p$ is defined by
 $$\W_p(\mu_1,\mu_2):= \inf_{\pi\in \C(\mu_1,\mu_2)} \bigg(\int_{M\times M} \rr(x,y)^p \pi(\d x,\d y) \bigg)^{\ff 1 {p\lor 1}},\ \ \mu_1,\mu_2\in \scr P,$$
 where $\C(\mu_1,\mu_2)$ is the set of all probability measures on $M\times M$ with marginal distributions $\mu_1$ and $\mu_2$. A measure $\pi\in \C(\mu_1,\mu_2)$ is called a coupling of $\mu_1$ and $\mu_2$.

Since $M$ is connected and compact, $L$ has  discrete spectrum and all   eigenvalues $\{\ll_i\}_{i\ge 0}$ of $-L$ listed in the increasing order counting multiplicities satisfy (see for instance \cite{Chavel})
 \beq\label{*2} \kk^{-1} i^{\ff 2 d} \le \ll_i\le \kk i^{\ff 2 d},\ \ i\ge 0\end{equation}
for some constant $\kk>1$. Our main results are  stated as  follows, which cover the corresponding assertions derived in \cite{eWZ} for $B(\ll)=\ll$.

\beg{thm}[Lower bound estimates] \label{T1.1} Let $B\in \BB$.
\beg{enumerate} \item[$(1)$] There exists a constant $c\in (0,1]$ with $c=1$ when $\pp M$ is empty or convex, such that
$$ \liminf_{t\to \infty}  \inf_{x\in M}    \Big\{t \E^x [\W_2(\mu^B_{t},\mu)^2]\Big\}\ge   c  \sum_{i=1}^\infty\ff 2 {\ll_i B(\ll_i)}.$$
\item[$(2)$] Let $B\in \BB_\aa$   for some $\aa\in [0,1]$.
If $d>2(1+\aa)$, then  for any $p>0$,
$$  \liminf_{t\to \infty}  \inf_{x\in M}    \Big\{t^{\ff 2{d-2\aa}}\big( \E^x [\W_p(\mu^B_{t},\mu)]\big)^{\ff 2{p\land 1}} \Big\}>0.$$
\item[$(3)$] Let $B(\ll)=\ll^\aa$   for some $\aa\in [0,1]$.   If  $d=2(1+\aa)\ ($i.e. $\aa=1$ and $d=4$, or $\aa=\ff 1 2$ and $d=3)$, $M=\TO^d$ and $V=0$, then
$$  \liminf_{t\to \infty}  \inf_{x\in M}    \Big\{\ff{t}{\log t} \big(\E^x [\W_1(\mu^B_{t},\mu)]\big)^2\Big\}>0.$$
\end{enumerate}
   \end{thm}

\beg{thm}[Upper  bound estimates] \label{T1.2} Let $B\in \BB^\aa$   for some $\aa\in [0,1]$.
\beg{enumerate} \item[$(1)$] If $d<2(1+\aa)$, then
$$ \limsup_{t\to \infty}  \sup_{x\in M}    \Big\{t \E^x [\W_2(\mu^B_{t},\mu)^2]\Big\}\le     \sum_{i=1}^\infty\ff 2 {\ll_i B(\ll_i)}<\infty.$$
\item[$(2)$] If   $d>2(1+\aa)$, then
$$ \limsup_{t\to \infty}  \sup_{x\in M}    \Big\{t^{\ff{2}{d-2\aa}} \E^x [\W_2(\mu^B_{t},\mu)^2]\Big\}<\infty.$$
\item[$(3)$] If   $d=2(1+\aa)$, i.e. either $\aa=1$ and $d=4$, or $\aa=\ff 1 2$ and $d=3$, then
$$ \limsup_{t\to \infty}  \sup_{x\in M}    \Big\{ \ff t{\log t}  \E^x [\W_2(\mu^B_{t},\mu)^2]\Big\}<\infty.$$
\end{enumerate}
   \end{thm}

The following is a straightforward consequence of Theorems \ref{T1.1} and \ref{T1.2}.

\beg{cor}\label{C1.3} Let $B\in  \BB^\aa\cap \BB_\aa$    for  some $\aa\in [0,1]$.
\beg{enumerate} \item[$(1)$] If $\pp M$ is empty or convex, then
\beq\label{UNF}   \lim_{t\to \infty}     \Big\{t \E^x [\W_2(\mu^B_{t},\mu)^2]\Big\}=    \sum_{i=1}^\infty\ff 2 {\ll_iB(\ll_i)}\end{equation}
uniformly in $x\in M$, where the limit is finite if and only if $d<2(1+\aa)$.
In general, there exists a constant $c\in (0,1]$ such that
\beq\label{UEST} \beg{split}  &c\sum_{i=1}^\infty\ff 2 {\ll_i B(\ll_i)} \le \liminf_{t\to \infty}  \inf_{x\in M}    \Big\{t \E^x [\W_2(\mu^B_{t},\mu)^2]\Big\}\\
&\le \limsup_{t\to \infty}  \sup_{x\in M}    \Big\{t \E^x [\W_2(\mu^B_{t},\mu)^2]\Big\}
\le   \sum_{i=1}^\infty\ff 2 {\ll_i B(\ll_i)}.\end{split} \end{equation}
 \item[$(2)$] If $d> 2(1+\aa)$, then for any $\vv\in (0,\aa)$ there exist constants $c>c(\vv)>0$ such that
 \beg{align*} &c(\vv) t^{-\ff{2}{d-2\aa}} \le \inf_{x\in M}\big( \E^x [\W_\vv(\mu_t^B,\mu)]\big)^{\ff 2\vv} \\
 &\le  \inf_{x\in M}  \E^x [\W_2(\mu_t^B,\mu)^2] \le\sup_{x\in M}    \E^x  [\W_2(\mu^B_{t},\mu)^2]\le c t^{-\ff{2}{d-2\aa}},\ \ \ t\ge 1.\end{align*}
 \item[$(3)$]  Let $d= 2(1+\aa), $ i.e. either $d=3$ and $\aa=\ff 1 2$, or $\aa=1$ and $d=4$.   Then there exists a constant $c>0$ such that
$$\sup_{x\in M} \E^x [\W_2(\mu^B_{t},\mu)^2]\le c t^{-1} \log t,\ \ t\ge 2.$$
On the other hand, when $B(\ll)=\ll^\aa, M=\TO^d$ and $V=0$, then there exists a constant $c'>0$ such that
$$\inf_{x\in M} \E^x [\W_1(\mu^B_{t},\mu)^2]\ge c' t^{-1} \log t,\ \ t\ge 2.$$
 \end{enumerate}
   \end{cor}

Finally,  we have the following result on the weak convergence of $t \W_2(\mu_t^B,\mu)^2$.

\beg{thm}\label{T1.4}  Let $B\in  \BB^\aa$    for  some $\aa\in [0,1]$, and let $\pp M$ be empty or convex. If $d<2(1+\aa)$, then
$$\lim_{t\to\infty} \sup_{x\in M} |\P^x (t \W_2(\mu_t^B,\mu)^2<a)- F(a)|=0,\ \ a\ge 0,$$
where $F(a):=\P(\xi<a)$  for
$$\xi:=\sum_{i=1}^\infty\ff{2\xi_i^2}{\ll_i B(\ll_i)}$$
and i.i.d. random variables  $\{\xi_i\}$ with the standard normal distribution $N(0,1)$. \end{thm}

Following the line of   \cite{eWZ},  we will first study the modified empirical measure $\mu_{t, r}^B:= \mu_t^B  P_r$ for $r>0$ in Section 2,  present some lemmas in Section 3,
and finally prove  Theorems \ref{T1.1},  \ref{T1.2}  and \ref{T1.4} in Sections 4,  5 and 6 respectively.

 \section{Modified empirical measures}

In this part, we allow $M$ to be non-compact, but assume that  the (Neumann)   semigroup  $ P_t$ generated by $L$ is ultracontractive, i.e.
\beq\label{ULT} \| P_t\|_{1\to\infty}:=\sup_{\mu(|f|)\le 1} \| P_t f\|_\infty<\infty,\ \ t>0.\end{equation}
Consequently,  $-L$ has discrete spectrum and the heat kernel $ p_t(x,y)$ of $ P_t$ with respect to $\mu$ satisfies
\beq\label{HK0}  p_t(x,y)=1+ \sum_{i=1}^\infty \e^{-t\ll_i}\phi_i(x)\phi_i(y)\le \| P_t\|_{1\to\infty}<\infty,\ \ t>0, x,y\in M,\end{equation}
where $\{\ll_i\}_{i\ge 0} $ are all eigenvalues of $-L$ and
 $\{\phi_i\}_{i\ge 0} $ is  the eigenbasis, i.e. $\phi_0\equiv 1$ and $\{\phi_i\}_{i\ge 0}$ is an orthonormal basis of $L^2(\mu)$ with $L\phi_i= -\ll_i \phi_i$.

For any $p\ge 1$ and $f\in L^p(\mu)$,   let $\|f\|_p:=\{\mu(|f|^p)\}^{\ff 1 p}$ be the $L^p(\mu)$-norm of $f$.
 Then there exists a constant $c>0$ such that
 $$\|P_t f\|_p\le c \e^{-\ll_1t} \|f\|_p,\ \ t\ge 0, p\in [1,\infty],  f\in L^p_0(\mu), $$
 where $L_0^p(\mu):=\{f\in L^p(\mu): \mu(f):= \int_Mf\d\mu=0\}$.
 Consequently, for any $B\in \BB$,
 \beq\label{EXP} \|P_t^B f\|_p= \|\E P_{S_t^B} f\|_p \le c \|f\|_p \E\e^{-\ll_1 S_t^B} = c\e^{-B(\ll_1)t} \|f\|_p,\ \   t\ge 0,p\in [1,\infty], f\in L^p_0(\mu).\end{equation}

   As in \cite{eWZ}, we consider the modified empirical measure
 $$ \mu_{t,r}^B:= \mu_t^B  P_r,\ \ \ r,t>0.$$
 By \eqref{HK0}, we have
\beq\label{MTR} f_{t,r}:= \ff{ \d\mu_{t,r}^B}{\d\mu}
  = 1+ \ff 1 {\ss t} \sum_{i=1}^\infty \e^{-r\ll_i} \psi_i(t)\phi_i, \ \
  \psi_i(t):= \ff 1 {\ss t} \int_0^t \phi_i(X_s^B)\d s,\ \ \ \ r,t>0.\end{equation}
 The main result in this section is the following.

 \begin{thm}\label{T2.1} Let $B\in {\bf B}$, $M$ be a $d$-dimensional connected complete Riemannian manifold possibly with a boundary such that $\eqref{ULT}$ holds.
\beg{enumerate} \item[$(1)$] For any $r>0$,
$$
\lim_{t\to\infty}\sup_{x\in M}\left|t\E^x[\W_2(\mu^B_{t,r},\mu)^2]-\sum_{i=1}^\infty\frac{2}{\lambda_i B(\ll_i) \e^{2r\lambda_i}}\right|=0.
$$
\item[$(2)$]   For any $C>0$, let
$$\scr P(C):=\{\nu\in \scr P:\ \nu=h_\nu\mu, \|h_\nu\|_\infty\le C\}.$$
Then for any $C>1$,
$$\lim_{t\to\infty}\sup_{\nu\in \scr P(C)}\left|\P^\nu(t\W_2(\mu_{t,r}^B,\mu)^2<a)-F_r(a)\right|=0,\quad a\in \R,$$
where for i.i.d. random variables $\xi_i$ with distribution $N(0,1)$,  $F_r:= \P(\xi_r<\cdot)$ is the distribution function of
$$\xi_r:=\sum_{i=1}^\infty\ff {2\xi_i^2}{\ll_i B(\ll_i)\e^{2\ll_i r}},\quad r>0.$$\end{enumerate}
\end{thm}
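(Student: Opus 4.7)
The plan is to express everything in terms of the Gaussian-like random coefficients $\psi_i(t)$ from the eigenfunction expansion \eqref{MTR}, and then to linearize $\W_2^2$ around the constant density $1$, following the strategy developed in \cite{eWZ} for $B(\ll)=\ll$. The key intermediate claim I would establish is the uniform two-sided estimate
\begin{equation*}
\W_2(\mu_{t,r}^B,\mu)^2=\ff 1 t\sum_{i=1}^\infty \ff{\e^{-2r\ll_i}}{\ll_i}\psi_i(t)^2+o(t^{-1}),\qquad t\to\infty,
\end{equation*}
with the error uniform in $x\in M$ (and, for part (2), uniform over $\nu\in\scr P(C)$). Both conclusions of the theorem then follow once the asymptotics of the coefficients $\psi_i(t)^2$ are under control.

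To prove the linearization, I would set $g_{t,r}:=(-L)^{-1}(f_{t,r}-1)=\ff 1{\ss t}\sum_{i\ge 1}\ff{\e^{-r\ll_i}}{\ll_i}\psi_i(t)\phi_i$, so that by Parseval $\int|\nn g_{t,r}|^2\d\mu$ is exactly the series appearing on the right. The smoothing by $P_r$ together with \eqref{HK0} and \eqref{EXP} force $\|f_{t,r}-1\|_\infty\to 0$ uniformly in the starting point, so the Benamou--Brenier formula applied to the linear interpolation between $\mu$ and $\mu_{t,r}^B$ yields the upper bound; the matching lower bound follows from Kantorovich duality tested against $g_{t,r}$, or equivalently from the classical two-sided comparison between $\W_2$ and the $\dot H^{-1}(\mu)$-norm when the density is close to $1$, with errors of order $\|f_{t,r}-1\|_\infty\cdot t^{-1}=o(t^{-1})$.

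With this in hand, part (1) reduces to computing $\lim_{t\to\infty}\E^x[\psi_i(t)^2]$. Since $P_s^B\phi_i=\e^{-sB(\ll_i)}\phi_i$, the stationary calculation gives
\begin{equation*}
\E^\mu[\psi_i(t)^2]=\ff 2 t\int_0^t(t-s)\e^{-sB(\ll_i)}\d s=\ff 2{B(\ll_i)}-\ff{2\big(1-\e^{-tB(\ll_i)}\big)}{tB(\ll_i)^2}\lra\ff 2{B(\ll_i)},
\end{equation*}
and \eqref{EXP} lets one replace $\E^\mu$ by $\E^x$ up to an error that is exponentially small in $t$, uniformly in $x$. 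Exchanging limit with summation is justified by the uniform bound $\E^x[\psi_i(t)^2]\le C/B(\ll_i)$ and by the convergence of $\sum_i\e^{-2r\ll_i}/(\ll_iB(\ll_i))$ for $r>0$, which follows from \eqref{*2}.

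For part (2), I would establish a finite-dimensional central limit theorem asserting $(\psi_1(t),\dots,\psi_N(t))\Rightarrow(\ss{2/B(\ll_1)}\,\xi_1,\dots,\ss{2/B(\ll_N)}\,\xi_N)$ for every fixed $N$, with $\xi_i$ i.i.d. $N(0,1)$ and the convergence uniform in $\nu\in\scr P(C)$. The joint Gaussianity and the independence of the $\xi_i$ arise from the martingale-approximation CLT for additive functionals of the reversible Markov process $X^B$, the limiting covariance being diagonal thanks to the $L^2(\mu)$-orthogonality of $\{\phi_i\}$; the uniformity in $\nu\in\scr P(C)$ rests again on \eqref{EXP}. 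A tail-truncation argument, controlled by $\E^\nu[\psi_i(t)^2]\le C/B(\ll_i)$, then lifts finite-dimensional convergence to distributional convergence of the full series $t\W_2(\mu_{t,r}^B,\mu)^2$ towards $\xi_r$, and the Kolmogorov-distance convergence is uniform in $\nu$ because $F_r$ is continuous. The main technical obstacle is the linearization step itself: the remainder in $\W_2^2\approx\int|\nn g_{t,r}|^2\d\mu$ must be quantified uniformly in $x$ (and $\nu$), which forces one to exploit the $P_r$-smoothing to tame $1/f_{t,r}$ and the optimal transport map between $\mu$ and $\mu_{t,r}^B$.
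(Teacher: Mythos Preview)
Your proposal is correct and follows essentially the same route as the paper: reduce $t\W_2(\mu_{t,r}^B,\mu)^2$ to the quadratic form $\Xi_r(t)=t\mu(|\nn L^{-1}(f_{t,r}-1)|^2)$ via a two-sided linearization whose error is governed by $\|f_{t,r}-1\|_\infty$ (the paper packages this as Lemmas~\ref{ee:lem1}, \ref{ee:lem2}, \ref{ee:lem3} together with the lower bound \eqref{*WW*} imported from \cite{eWZ}), and then for part~(2) combine a finite-dimensional CLT for $(\psi_1,\dots,\psi_n)$---the paper invokes \cite[Theorem~2.4$'$]{eWL} rather than a martingale approximation, but either works---with a second-moment tail truncation. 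One small correction: Theorem~\ref{T2.1} is stated for possibly non-compact $M$ under only the ultracontractivity assumption \eqref{ULT}, so the summability of $\sum_i \e^{-2r\ll_i}/(\ll_iB(\ll_i))$ must come from integrating \eqref{HK0} on the diagonal (as the paper does) rather than from the Weyl bound \eqref{*2}, which is asserted only for compact $M$.
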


 To prove this result, we first present some lemmas, where the first follows  from  \cite[Lemma 2.3]{eWZ}, which goes back to  \cite[Proposition 2.3]{eAST}.

\begin{lem}\label{ee:lem1} Let $B\in {\bf B}$, $\mathscr{M}(a,b):=\frac{a-b}{\log a-\log b}1_{\{a\wedge b>0\}}.$ Then
 $$\W_2(\mu^B_{t,r},\mu)^2\leq\int_M\frac{|\nabla L^{-1}(f_{t,r}-1)|^2}{\mathscr{M}(f_{t,r},1)}\,\d\mu,\quad t,r>0.$$ \end{lem}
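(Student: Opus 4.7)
The plan is to recognize the inequality as the standard Benamou--Brenier bound along a linear interpolation of densities (the argument of \cite{eAST} used in \cite[Lemma 2.3]{eWZ}), and then verify its hypotheses for the specific density $f_{t,r}$. First I would check that $f_{t,r}$ is smooth and strictly positive on all of $M$: under \eqref{ULT} the heat kernel $p_r(x,y)$ is smooth, bounded, and strictly positive by \eqref{HK0}, so $f_{t,r}(y)=\ff 1 t\int_0^t p_r(X_s^B,y)\,\d s$ inherits these properties. This guarantees that $h:=-L^{-1}(f_{t,r}-1)$ is well-defined (with $\mu(h)=0$ and $Lh=1-f_{t,r}$), smooth by elliptic regularity, and satisfies $|\nabla h|^2=|\nabla L^{-1}(f_{t,r}-1)|^2 \in L^1(\mu)$.

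Next I would introduce the interpolation
$$g_s:=1+s(f_{t,r}-1),\qquad v_s:=\nabla h/g_s,\qquad s\in[0,1],$$
connecting $\mu$ at $s=0$ to $\mu_{t,r}^B$ at $s=1$. Since $g_s>0$ everywhere on $M$ (because $f_{t,r}>0$), the velocity is well-defined and one checks
$$\pp_s g_s+\nabla\cdot_\mu(g_s v_s)=(f_{t,r}-1)+\nabla\cdot_\mu(\nabla h)=(f_{t,r}-1)+Lh=0,$$
so $(g_s\mu,v_s)$ is admissible in the Benamou--Brenier representation of $\W_2(\mu_{t,r}^B,\mu)$. This yields
$$\W_2(\mu_{t,r}^B,\mu)^2\le\int_0^1\int_M|v_s|^2 g_s\,\d\mu\,\d s=\int_M|\nabla h|^2\bigg(\int_0^1\ff{\d s}{1+s(f_{t,r}-1)}\bigg)\d\mu.$$
The elementary identity $\int_0^1\d s/(1+s(f-1))=(\log f)/(f-1)=1/\mathscr{M}(f,1)$, valid because $f_{t,r}>0$, then gives exactly the claimed bound.

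The main obstacle would have been justifying the Benamou--Brenier formula on a manifold possibly with boundary and making sense of the continuity equation for merely $H^1$ densities; but in our setting (compact $M$ in Section~1, or ultracontractive $P_t$ in Section~2) the smoothness and strict positivity of $f_{t,r}$ obtained in the first step make this essentially automatic, and on a convex or empty boundary the normal component of $\nabla h$ vanishes so no boundary terms appear when integrating by parts. Since the computation is precisely the one carried out in \cite[Lemma 2.3]{eWZ}, one could alternatively simply invoke that lemma after checking that $f_{t,r}$ is a smooth positive density.
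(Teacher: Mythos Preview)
Your proposal is correct and follows exactly the Benamou--Brenier/linear-interpolation argument of \cite[Proposition 2.3]{eAST} and \cite[Lemma 2.3]{eWZ}, which is precisely what the paper invokes (the paper gives no independent proof and simply cites \cite[Lemma 2.3]{eWZ}). One small remark: the vanishing of the normal component of $\nabla h$ on $\partial M$ comes from the Neumann boundary condition built into $L^{-1}$, not from convexity of $\partial M$.
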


By the ergodicity we have   $\lim_{t\to\infty} \mathscr{M}(f_{t,r},1) =1$ (see Lemma \ref{ee:lem3} below), so that  this lemma implies that  $t\W_2(\mu_{t,r}^B,\mu)^2$ is asymptotically bounded above by
\beq\label{XIR} \Xi_r(t):=t\mu\big( |\nabla L^{-1}(f_{t,r}-1)|^2\big), \ \  \quad t,r>0,\end{equation}  where $\mu(f):=\int_Mf\d\mu$ for $f\in L^1(\mu)$.
Thus, we first estimate $\Xi_r(t)$.

\begin{lem}\label{ee:lem2} Let $B\in {\bf B}$.
There exists a constant $c>0$ such that
\begin{equation}\label{ee:equ01}
\left|\E^{\nu}\Xi_r(t)-\sum_{i=1}^{\infty}\frac{2}{\ll_iB(\ll_i)\e^{2r\lambda_i}}\right|\leq\frac{c\|h_{\nu}\|_{\infty}}{t}\sum_{i=1}^{\infty}\frac{1}{\ll_iB(\ll_i)\e^{2r\lambda_i}},\quad t\geq 1,r>0,
\end{equation}
holds for any probability measure $\nu=h_{\nu}\mu.$ Consequently,
\begin{equation}\label{ee:equ02}\sup_{x\in M}\left|\E^x\Xi_r(t)-\sum_{i=1}^{\infty}\frac{2}{\ll_iB(\ll_i)\e^{2r\lambda_i}}\right|\leq\frac{c\| P_{\frac{r}{2}}\|_{2\to\infty}^2}{t}\sum_{i=1}^\infty\frac{1}{\ll_iB(\ll_i)\e^{r\lambda_i}},\quad t\geq 1,r>0.
\end{equation}
\end{lem}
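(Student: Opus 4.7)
The plan is to reduce $\Xi_r(t)$ to a single spectral series, evaluate its expectation through the eigenfunction relation together with the Markov property of $X^B$, and then absorb the effect of the initial distribution via the $L^\infty$-contractivity estimate \eqref{EXP}. From \eqref{MTR} one reads off $L^{-1}(f_{t,r}-1)=-t^{-1/2}\sum_{i\ge 1}\ll_i^{-1}\e^{-r\ll_i}\psi_i(t)\phi_i$, and integration by parts (with the Neumann boundary condition when $\pp M\ne\emptyset$) together with $\mu(\<\nn\phi_i,\nn\phi_j\>)=\ll_i\dd_{ij}$ yields
$$\Xi_r(t)=\sum_{i\ge 1}\ff{\e^{-2r\ll_i}}{\ll_i}\psi_i(t)^2.$$
Since $P_v^B\phi_i=\e^{-vB(\ll_i)}\phi_i$, the Markov property gives $\E^\nu[\phi_i(X_s^B)\phi_i(X_u^B)]=\e^{-(s-u)B(\ll_i)}\E^\nu[\phi_i(X_u^B)^2]$ for $u\le s$, hence
$$\E^\nu\psi_i(t)^2=\ff{2}{t}\int_0^t\!\int_0^s \e^{-(s-u)B(\ll_i)}\,\E^\nu[\phi_i(X_u^B)^2]\,\d u\,\d s,$$
so everything reduces to estimating the bias $\E^\nu[\phi_i(X_u^B)^2]-1$ in each of the two regimes.

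For \eqref{ee:equ01}, using $\mu$-reversibility of $P_u^B$ and $\mu(\phi_i^2)=1$ one writes $\E^\nu[\phi_i(X_u^B)^2]-1=\mu(P_u^B(h_\nu-1)\phi_i^2)$, which is bounded by $\|P_u^B(h_\nu-1)\|_\infty\le c(1+\|h_\nu\|_\infty)\e^{-uB(\ll_1)}\le 2c\|h_\nu\|_\infty\e^{-uB(\ll_1)}$ (via \eqref{EXP} applied to $h_\nu-1\in L^\infty_0(\mu)$ and the fact $\|h_\nu\|_\infty\ge 1$). Plugging back into the double integral, the $\nu=\mu$ part equals $\ff{2}{B(\ll_i)}-\ff{2(1-\e^{-tB(\ll_i)})}{tB(\ll_i)^2}$, with defect $\le\ff{2}{tB(\ll_1)B(\ll_i)}$; the non-stationary correction, after the change of variables $v=s-u$ and the trivial bound $\int_0^\infty \e^{-uB(\ll_1)}\d u=1/B(\ll_1)$, contributes at most $\ff{C\|h_\nu\|_\infty}{tB(\ll_1)B(\ll_i)}$. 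Multiplying by $\ff{\e^{-2r\ll_i}}{\ll_i}$ and summing over $i\ge 1$ produces \eqref{ee:equ01}.

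For \eqref{ee:equ02} the delta-initial condition lacks an $L^\infty$ density, so I replace the density bound by the \emph{pointwise} eigenfunction estimate $|\phi_j(x)|\le\|P_{r/2}\|_{2\to\infty}\e^{r\ll_j/2}$, which follows from $\phi_j=\e^{r\ll_j/2}P_{r/2}\phi_j$ and $\|P_{r/2}\phi_j\|_\infty\le\|P_{r/2}\|_{2\to\infty}\|\phi_j\|_2=\|P_{r/2}\|_{2\to\infty}$. Squaring yields $\|\phi_i^2-1\|_\infty\le 2\|P_{r/2}\|_{2\to\infty}^2\e^{r\ll_i}$, and then \eqref{EXP} gives $|P_u^B(\phi_i^2-1)(x)|\le 2c\|P_{r/2}\|_{2\to\infty}^2\e^{r\ll_i}\e^{-uB(\ll_1)}$. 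Running this through the same integral manipulation as for \eqref{ee:equ01} produces a per-$i$ error $\le\ff{C'\|P_{r/2}\|_{2\to\infty}^2\e^{r\ll_i}}{tB(\ll_1)B(\ll_i)}$; after multiplication by $\ff{\e^{-2r\ll_i}}{\ll_i}$ and summation, the $\e^{+r\ll_i}$ cost of upgrading $\dd_x$ into an $L^\infty$-controlled quantity eats exactly one of the two factors $\e^{-r\ll_i}$ in the spectral weight of $\Xi_r$, leaving the $\e^{-r\ll_i}$ factor visible on the right of \eqref{ee:equ02}. The delicate point is precisely this balance of exponentials: it is essential that $\|\phi_j\|_\infty$ be estimated \emph{through} $\|P_{r/2}\|_{2\to\infty}$, and not through the useless crude bound $\|\phi_j\|_\infty$ itself, so that the exponential weights align precisely.
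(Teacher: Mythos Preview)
Your proof is correct and follows essentially the same route as the paper: the spectral representation $\Xi_r(t)=\sum_{i\ge1}\ll_i^{-1}\e^{-2r\ll_i}\psi_i(t)^2$, the Markov-property computation of $\E^\nu\psi_i(t)^2$, the split into the stationary part and the $P_u^B(h_\nu-1)$ correction controlled via \eqref{EXP}, and---for the $\nu=\dd_x$ case---the identical eigenfunction bound $\|\phi_i\|_\infty\le\|P_{r/2}\|_{2\to\infty}\e^{r\ll_i/2}$ that trades one of the two factors $\e^{-r\ll_i}$ for the $\|P_{r/2}\|_{2\to\infty}^2$ prefactor.
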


\begin{proof}  By \eqref{HK0}, \eqref{MTR}, \eqref{XIR}, $L\phi_i=-\ll_i\phi_i$ and $\mu(\phi_i\phi_j)=1_{\{i=j\}}$ for $i,j\ge 0$,
we obtain
 \begin{equation}\label{ee:equ2}\Xi_r(t)=\sum_{i=1}^\infty\frac{|\psi_i(t)|^2}{\lambda_i \e^{2r\lambda_i}},\quad t,r>0.\end{equation}
Since  $P_t^B$ is the Markov semigroup of $X_t^B$,  the Markov property implies
$$\E^\nu(\phi_i(X_t^B)|X_s^B)=  P_{t-s}^B \phi_i(X_s^B)=\e^{-B(\ll_i) (t-s)}\phi_i(X_s^B),\ \ i\ge 0, t\ge s\ge 0.$$ So,   $\psi_i(t):=\ff 1 {\ss t}\int_0^t \phi_i(X_s^B)\d s$ satisfies
\begin{equation}\label{ee:equ3}\begin{split}
&\E^\nu|\psi_i(t)|^2=\frac{1}{t}\E^\nu\left|\int_0^t\phi_i(X^B_s)\,\d s\right|^2
=\frac{2}{t}\int_0^t\,\d s_1\int_{s_1}^t\E^\nu[\phi_i(X^B_{s_1})\phi_i(X^B_{s_2})]\,\d s_2\\
&=\frac{2}{t}\int_0^t\E^\nu|\phi_i(X^B_{s_1})|^2\,\d s_1\int_{s_1}^t \e^{-B(\ll_i)(s_2-s_1)}\,\d s_2\\
 &=\frac{2}{B(\ll_i) t}\int_0^t\nu( P_s^B\phi_i^2)(1-\e^{-B(\ll_i)(t-s)})\,\d s,\ \ t>0.\end{split}\end{equation}
This together with \eqref{ee:equ2} imply
\begin{equation}\label{ee:equ4}\E^\nu\Xi_r(t)=\frac{2}{t}\sum_{i=1}^\infty\frac{1}{\ll_iB(\ll_i)\e^{2r\lambda_i}}\int_0^t\nu( P_s^B\phi_i^2)(1-\e^{-B(\ll_i)(t-s)})\,\d s=: I_1+I_2,\end{equation}
where
\begin{equation}\begin{split}\label{ee:equ5}
I_1:=\frac{2}{t}\sum_{i=1}^\infty\int_0^t\frac{1-\e^{-(t-s)B(\ll_i)}}{\ll_iB(\ll_i)\e^{2r\lambda_i}}\,\d s
=\sum_{i=1}^\infty\frac{2}{\ll_iB(\ll_i)\e^{2r\lambda_i}}-\frac{2}{t}\sum_{i=1}^\infty\frac{1-\e^{-B(\ll_i) t}}{\lambda_i B(\ll_i)^2 \e^{2r\lambda_i}},
\end{split}\end{equation}
and due to $\nu( P^B_s\phi_i^2)=\mu(h_{\nu}  P_s^B\phi_i^2)=\mu(\phi_i^2  P_s^B h_{\nu})$,
$$I_2:=\E^\nu \Xi_r(t)-I_1=\frac{2}{t}\sum_{i=1}^{\infty}\int_0^t\frac{1-\e^{-(t-s)B(\ll_i)}}{\ll_iB(\ll_i)\e^{2r\lambda_i}}\mu(\phi_i^2 P_s^B h_{\nu}-1)\,\d s.$$
Since  $\mu(\phi_i^2)=1,$  by \eqref{EXP},    there exists a constant $c_0>0$ such that
$$|\mu(\phi_i^2 P_s^B h_\nu-1)|=|\mu(( P_s^B h_\nu-1)\phi_i^2)|\leq\| P_s^B(h_\nu-1)\|_\infty\le  c_0 \e^{-B(\ll_1) s}\|h_\nu\|_\infty,\quad s\geq 0.$$
Therefore, we find a   constant $c_1>0$ such that
\begin{equation}\label{ee:equ05}|I_2|\leq\frac{c_1}{t}\|h_\nu\|_\infty\sum_{i=1}^\infty\frac{1}{\ll_iB(\ll_i)\e^{2r\lambda_i}}<\infty.\end{equation}
Combining \eqref{ee:equ4}, \eqref{ee:equ5} and \eqref{ee:equ05}, we find  a constant $c_2>0$ such that
$$\left|\E^\nu \Xi_r(t)-\sum_{i=1}^\infty\frac{2}{\ll_iB(\ll_i)\e^{2r\lambda_i}}\right|\leq\frac{c_2\|h_\nu\|_\infty}{t}\sum_{i=1}^\infty\frac{1}{\ll_iB(\ll_i)\e^{2r\lambda_i}}.$$
When $\nu=\delta_x$, \eqref{ee:equ4} becomes
\begin{equation}\label{ee:equ6}\E^x\Xi_r(t)\leq I_1+I_2(x),\end{equation}
where $I_1$ is in \eqref{ee:equ5} and
$$I_2(x):=\frac{2}{t}\sum_{i=1}^\infty\int_0^t\frac{1-\e^{-B(\ll_i)(t-s)}}{\ll_iB(\ll_i)\e^{2r\lambda_i}} P_s^B  \{\phi_i^2-1\}(x)\,\d s.$$
Since $\mu(\phi_i^2)=1$,  \eqref{EXP} implies $\|P_s^B \phi_i^2-1\|_\infty \le c \e^{-B(\ll_1) s} \|\phi_i\|_\infty^2.$  Combining this with
$$\|\phi_i\|_\infty^2 =\e^{r\ll_i }\|P_{\ff r 2} \phi_i\|_\infty^2\le \e^{\ll_i r} \|P_{\ff r 2}\|_{2\to\infty}^2,$$
  we find a constant $c_3>0$ such that
$$  I_2(x)\leq\frac{2}{t}\sum_{i=1}^\infty\int_0^t\frac{c}{\ll_iB(\ll_i)\e^{r\lambda_i}} \e^{-B(\ll_1) s}  \|P_{\ff r 2}\|_{2\to \infty}^2\d s
\le \ff{c_3\|P_{\ff r 2}\|_{2\to\infty}^2}t \sum_{i=1}^\infty \frac{1}{\ll_iB(\ll_i)\e^{r\lambda_i}}.$$  This together with \eqref{ee:equ5} and \eqref{ee:equ6}  implies \eqref{ee:equ02}.
\end{proof}

The following Lemma shows that $\lim_{t\to\infty}\mathscr{M}(f_{t,r},1)=1,r>0$.

\begin{lem}\label{ee:lem3}
Let $\|f_{t,r}-1\|_{\infty}=\sup_{y\in M}|f_{t,r}(y)-1|$. Then there exists a function $c:\mathbb{N}\times(0,\infty)\to(0,\infty)$ such that
$$\sup_{x\in M}\E^x[\|f_{t,r}-1\|_\infty^{2k}]\leq c(k,r)t^{-k},\quad t\geq 1,r>0,k\in\mathbb{N}.$$
\end{lem}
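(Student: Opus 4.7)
The plan is to exploit the smoothing of $P_{r/2}$ to pass from $L^\infty$ to $L^{2k}$, and then to bound pointwise moments of $f_{t,r/2}$ using a Poisson equation and a martingale argument. Because $f_{t,r}=P_{r/2}f_{t,r/2}$ and $P_{r/2}1=1$, the ultracontractivity \eqref{ULT} gives
$$\|f_{t,r}-1\|_\infty\le \|P_{r/2}\|_{2k\to\infty}\,\|f_{t,r/2}-1\|_{2k},$$
so by Fubini it suffices to establish the pointwise bound
$$\sup_{x,y\in M}\E^x|f_{t,r/2}(y)-1|^{2k}\le c(k,r)\,t^{-k},\quad t\ge 1.$$

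Fix $y\in M$ and let $g_y:=p_{r/2}(\cdot,y)-1$, so that $\mu(g_y)=0$, $\|g_y\|_\infty\le\|P_{r/2}\|_{1\to\infty}$ and $f_{t,r/2}(y)-1=\ff{1}{t}\int_0^t g_y(X_s^B)\,\d s$. Solve the Poisson equation $B(L)h_y=g_y$ spectrally,
$$h_y=-\sum_{i\ge 1}\ff{\e^{-r\ll_i/2}\phi_i(y)}{B(\ll_i)}\phi_i.$$
Since $B(\ll_i)\ge B(\ll_1)>0$, the kernel identity $\sum_i\e^{-s\ll_i}\phi_i(x)\phi_i(y)=p_s(x,y)-1$ and the analogous formula $\sum_i\e^{-s\ll_i}|\nn\phi_i(x)|^2=\mathrm{tr}(\nn_x\otimes\nn_yp_s(x,y))|_{y=x}<\infty$ (obtained by differentiating the heat kernel) yield uniform bounds $\sup_y\|h_y\|_\infty\le C(r)$ and $\sup_y\|\nn h_y\|_\infty\le C(r)$. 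Dynkin's formula then gives
$$\int_0^t g_y(X_s^B)\,\d s=h_y(X_t^B)-h_y(X_0^B)-N_t^y,$$
where $N^y$ is a c\`adl\`ag martingale with predictable quadratic variation $\langle N^y\rangle_t=2\int_0^t\GG_B(h_y,h_y)(X_s^B)\,\d s$, and $\GG_B$ is the carr\'e du champ of $B(L)$. The Burkholder--Davis--Gundy inequality (in Kunita's form for jump martingales with bounded jumps) then yields
$$\E^x|N_t^y|^{2k}\le C_k\bigl(\|\GG_B(h_y,h_y)\|_\infty^k\,t^k+\|h_y\|_\infty^{2k}\bigr).$$

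The main obstacle is the uniform-in-$y$ control of $\|\GG_B(h_y,h_y)\|_\infty$, because $B(L)$ is non-local. Using the L\'evy--Khintchine representation $B(\ll)=b\ll+\int_0^\infty(1-\e^{-\ll s})\nu(\d s)$, the carr\'e du champ decomposes as
$$\GG_B(h,h)(x)=b|\nn h(x)|^2+\ff12\int_0^\infty P_s[(h-h(x))^2](x)\,\nu(\d s).$$
The local term is dominated by $b\|\nn h_y\|_\infty^2$. For the non-local part, split $\nu$ at $s=1$: for $s<1$ use $P_s[(h_y-h_y(x))^2](x)\le \|\nn h_y\|_\infty^2 P_s\rr(x,\cdot)^2(x)\le Cs\|\nn h_y\|_\infty^2$ via the heat-kernel second-moment bound, while for $s\ge 1$ use the trivial $4\|h_y\|_\infty^2$; the L\'evy-measure integrability $\int_0^\infty(1\wedge s)\nu(\d s)<\infty$ then controls the integral. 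Thus $\sup_y\|\GG_B(h_y,h_y)\|_\infty\le C(r)$, and combining with $|h_y(X_t^B)-h_y(X_0^B)|\le 2\|h_y\|_\infty$ gives $\E^x|\int_0^t g_y(X_s^B)\,\d s|^{2k}\le c(k,r)(1+t^k)$. Dividing by $t^{2k}$ and inserting into the reduction of the first paragraph completes the proof.
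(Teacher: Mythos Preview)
Your reduction via $P_{r/2}$-smoothing matches the paper exactly. After that point the paper takes a much lighter route: for $f=p_r(\cdot,y)-1$ it bounds $\E^x\big|\int_0^t f(X_s^B)\,\d s\big|^{2k}$ directly by an iterated-integral inequality (imported from \cite[Lemma~2.5]{eWZ}),
\[
\E^x\Big|\int_0^t f(X_s^B)\,\d s\Big|^{2k} \le \{2k(2k-1)\}^k\bigg(\int_{\Delta_1(t)} \big(\E^x|(fP^B_{r_2-r_1}f)(X_{r_1}^B)|^k\big)^{1/k}\,\d r_1\,\d r_2\bigg)^k,
\]
and then uses only $\|f\|_\infty\le 2\|P_r\|_{1\to\infty}$ together with \eqref{EXP} to get $|fP^B_sf|\le c\e^{-B(\ll_1)s}\|P_r\|_{1\to\infty}^2$, which makes the inner integral $O(t)$. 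No Poisson equation, no gradient estimates, no carr\'e du champ are needed. Your martingale route is a legitimate classical alternative and goes through on compact $M$, but this lemma is stated in Section~2 where $M$ may be non-compact with only \eqref{ULT} assumed: in that generality the uniform bounds $\sup_y\|\nn h_y\|_\infty<\infty$ and $P_s\rr(x,\cdot)^2(x)\le Cs$ require additional hypotheses (the latter is only established in the paper for compact $M$, via \cite[Lemma~3.1]{eWZ}), whereas the paper's argument uses nothing beyond \eqref{ULT} and \eqref{EXP}. The upside of your approach is that, once the carr\'e du champ bound is in hand, the underlying Bernstein-type martingale inequality would also yield exponential concentration rather than just polynomial moment bounds.
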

\begin{proof}
For fixed $r>0$ and $y\in M$, let $f= p_r(\cdot,y)-1$. For any $k\in\mathbb{N}$, we consider
$$I_k(s):=\E^x\left|\int_0^sf(X^B_t)\,dt\right|^{2k}=(2k)!\E^x\int_{\Delta_k(s)}f(X^B_{s_1})\cdots f(X^B_{s_{2k}})\,\d s_1\cdots \d s_{2k}, $$
where $\Delta_k(s):=\{(s_1,\cdots,s_{2k})\in[0,s]:0\leq s_1\le \cdots\leq s_{2k}\leq s\}$.

By the proof of  \cite[Lemma~2.5]{eWZ} with $X_t^B$ replacing $X_t$, we obtain
\begin{equation}\label{ee:equ8}I_k(t)\leq\sup_{s\in[0,t]}I_{k}(s)\leq \{2k(2k-1)\}^k\left(\int_{\Delta_1(t)}(\E^x|g(r_1,r_2)|^k)^{\frac{1}{k}}\,\d r_1\,\d r_2\right)^k,\end{equation}
where $g(r_1,r_2)=(f P^B_{r_2-r_1}f)(X^B_{r_1}),r_2\geq r_1\geq 0$.\\
By \eqref{ULT} we have
$$\|f\|_\infty=\| p_r(\cdot,y)-1\|_\infty\leq 2\| P_r\|_{1\to\infty}<\infty,$$
which together with \eqref{EXP}  implies
\begin{equation*}\begin{split}
|g(r_1,r_2)|^k\leq\|f P^B_{r_2-r_1}f\|_\infty^k\leq c \e^{-B(\ll_1) (r_2-r_1)k}\|f\|_\infty^{2k}
\leq c_1\| P_r\|_{1\to\infty}^{2k}\e^{-B(\ll_1) (r_2-r_1)k}
\end{split}\end{equation*} for some constant $c_1>0$.
Thus, there exists a constant $c_2>0$ such that
\begin{equation*}\begin{split}
 \int_{\Delta_1(t)}(\E^x |g(r_1,r_2)|^k)^\frac{1}{k}\,\d r_1\,\d r_2 \leq \int_0^t\,\d r_1\int_{r_1}^t c_1\| P_r\|^2_{1\to\infty}\e^{-B(\lambda_1)(r_2-r_1)}\,\d r_2
\le c_2 \| P_r\|_{1\to\infty}^{2} t.
\end{split}\end{equation*}
Combining this with    \eqref{ee:equ8}, we find a constant $c_3>0$ such that
$$
\sup_{x,y\in M}\E^x[|f_{t,r}(y)-1|^{2k}]=t^{-2k}I_k(t)\leq c_3 \| P_r\|_{1\to\infty}^{2k} t^{-k},\quad t\geq 1,r>0.$$
Noting that $f_{t,r}-1= P_{\ff r 2}(f_{t,\ff r 2}-1)$, this implies that for some constant $c>0$
 \begin{equation*}\begin{split}
&\sup_{x\in M}\E^x[\|f_{t,r}-1\|_\infty^{2k}]=\sup_{x\in M}\E^x[\| P_{\frac{r}{2}}(f_{t,\frac{r}{2}}-1)\|_\infty^{2k}]\\
&\leq\| P_{\frac{r}{2}}\|_{2k\to\infty}^{2k}\sup_{x\in M}\E^x[\mu(|f_{t,\frac{r}{2}}-1|^{2k})]
\leq c\| P_{\frac{r}{2}}\|_{1\to\infty}^{4k}t^{-k}.
\end{split}\end{equation*}
 \end{proof}

\beg{proof}[Proof of Theorem \ref{T2.1}]  (1) It suffices to verify the following estimates for any $r>0$:
\beq\label{LPB}\liminf_{t\to\infty}\inf_{x\in M}\{t\E^x[\W_2(\mu^B_{t,r},\mu)^2]\}\ge \sum_{i=1}^\infty\frac{2}{\ll_iB(\ll_i)\e^{2r\lambda_i}}, \end{equation}
 \beq\label{UPB}
 \limsup_{t\to\infty}\sup_{x\in M}\{t\E^x[\W_2(\mu^B_{t,r},\mu)^2]\}\leq\sum_{i=1}^\infty\frac{2}{\ll_iB(\ll_i)\e^{2r\lambda_i}}. \end{equation}

  Let $B_\si:= \{\|f_{t,r}-1\|_\infty\le\si^{\ff 2 3}\}$ for $\si>0$. By the proofs of \cite[(2.53) and (2.54)]{eWZ} for $X_t^B$ replacing $X_t$, there exists a constant $c>0$ such that
 \beq\label{*WW*} t \W_2(\mu_{t,r}^B,\mu)^2\ge 1_{B_\si} \big\{\Xi_r(t)-ct\si^{\ff 5 3}\big\},\ \ r,t,\si>0.\end{equation}
 Taking   $\si= t^{-\gg}$ for some $\gg\in (\ff 3 5,\ff 3 4)$, we have $t\si^{\ff 5 3}\downarrow 0$ as $t\uparrow \infty$, and  according to Lemma \ref{ee:lem3},
 $$\lim_{t\to\infty} \sup_{x\in M}\P^x(B_\si^c)\le \lim_{t\to\infty} \sup_{x\in M}t^{\ff{4\gg}3} \E^x[\|f_{t,r}-1\|_\infty^2]=0, $$    so that by \eqref{ee:equ02}
$$\limsup_{t\to\infty} \sup_{x\in M} \E^x [1_{B_\si^c} \Xi_r(t)] \le c(r) \limsup_{t\to\infty} \sup_{x\in M}\P^x(B_\si^c) =0,$$
where $c(r):= \sum_{i=1}^\infty\ff 2 {\ll_i B(\ll_i) \e^{2\ll_i r}}<\infty.$ Thus, \eqref{*WW*} yields
$$ \liminf_{t\to\infty} \inf_{x\in M} \E^x \big[t\W_2(\mu_{t,r}^B, \mu)^2\big]\ge  \liminf_{t\to\infty} \inf_{x\in M} \E^x \big[\Xi_r(t)\big],$$
which together with \eqref{ee:equ02} implies \eqref{LPB}.

  Since $\mu(\phi_i^2)=1$ and $\ll_1>0$, by taking $x=y$ in \eqref{HK0} and integrating with respect to $\mu(\d x)$,  we obtain
$$ \sum_{i=1}^\infty\frac{1}{\ll_iB(\ll_i)\e^{2r\lambda_i}} \leq \ff 1 {\ll_1B(\ll_1)  } \sum_{i=1}^\infty \e^{-2r\lambda_i}<\infty.$$
 For any $\eta\in(0,1)$, let
$$A_\eta=\{\|f_{t,r}-1\|_\infty\leq \eta\}.$$
Noting that $f_{t,r}(y)\geq 1-\eta$ implies
$$\mathscr{M}(1,f_{t,r}(y))\geq\sqrt{f_{t,r}(y)}\geq\sqrt{1-\eta},$$
by Lemma \ref{ee:lem1} and \eqref{ee:equ02}, we find a constant   $c(r)>0$ such that
\begin{equation*}\begin{split}
&t\sup_{x\in M}\E^x[1_{A_\eta}\W_2(\mu_{t,r}^B,\mu)^2]\leq\sup_{x\in M}\E^x\left\{\frac{\Xi_r(t)}{\sqrt{1-\eta}}\right\}\\
&\leq\frac{1}{\sqrt{1-\eta}}\sum_{i=1}^\infty\frac{2}{\ll_iB(\ll_i)\e^{2r\lambda_i}}\left(1+\frac{c(r)}{t}\right),\quad t>0,\eta\in(0,1).
\end{split}\end{equation*}
Thus,
\begin{equation}\label{ee:equ10}\begin{split}
&t\sup_{x\in M}\E^x[\W_2(\mu_{t,r}^B,\mu)^2]\\
&\leq\frac{1}{\sqrt{1-\eta}}\sum_{i=1}^\infty\frac{2}{\ll_iB(\ll_i)\e^{2r\lambda_i}}\left(1+\frac{c(r)}{t}\right)
 +t\sup_{x\in M}\E^x[1_{A_{\eta}^c}\W_2(\mu_{t,r}^B,\mu)^2]\\
&\leq\frac{1+c(r)t^{-1}}{\sqrt{1-\eta}}\sum_{i=1}^\infty\frac{2}{\ll_iB(\ll_i)\e^{2r\lambda_i}}+t\sup_{x\in M}\sqrt{\Pp^x(A_\eta^c)\E^x[\W_2(\mu_{t,r}^B,\mu)^4]}.
\end{split}\end{equation}
As shown in the proof  of \cite[Proposition~2.6]{eWZ}, we have
\begin{equation}\label{ee:equ11}\E^x\W_2(\mu_{t,r}^B,\mu)^4\leq\| P_r\|_{1\to\infty}(\mu\times \mu)(\rho^4)<\infty.\end{equation}
Moreover, Lemma \ref{ee:lem3} implies that for some constant $c(k,r)>0$
$$\sup_{x\in M}\Pp^x(A_\eta^c)\leq \eta^{-2k}c(k,r)t^{-k}.$$
  By taking $k=4$ and applying \eqref{ee:equ10} and \eqref{ee:equ11}, we conclude that
$$\limsup_{t\to\infty}\Big\{t\sup_{x\in M}\E^x[\W_2(\mu_{t,r}^B,\mu)^2]\Big\}\leq\frac{1}{\sqrt{1-\eta}}\sum_{i=1}^\infty\frac{2}{\ll_iB(\ll_i)\e^{2r\lambda_i}}.$$
Then \eqref{UPB} follows by letting $\eta\to 0.$

(2)  By Lemma \ref{ee:lem2}, it suffices to prove that for any $C>1$
\begin{equation}\label{P261} \lim_{t\to\infty}\sup_{\nu\in\scr{P}(C)}|\P^\nu(\Xi_r(t)<a)-\P(\xi_r<a)|=0,\quad a\ge 0.\end{equation}

Recall that
$$\Xi_r(t)=\sum_{i=1}^\infty \ff {|\psi_i(t)|^2} {\ll_i \e^{2\ll_i r}},\quad t,r>0.$$
Define for any $n\ge 1$,
$$\Psi_n(t):=(\psi_1(t),\cdots,\psi_n(t)),\quad t>0.$$
Then, for any $\vartheta\in \R^n$, we have
$$\langle \Psi_n(t),\vartheta\rangle=\ff 1 {\sqrt{t}}\int_0^t (\sum_{i=1}^n\vartheta_i \phi_i(X_s^B))ds.$$
By \cite[Theorem 2.4$'$]{eWL}, when $t\to\infty$, the law of $\langle \Psi_n(t),\vartheta\rangle$ under $\P^\nu$ converges weakly to the Gaussian distribution $N(0,\si_{n,\vartheta})$ uniformly in $\nu\in\scr{P}(C)$ with variance
\beg{equation*}\begin{split}
&\si_{n,\vartheta}:=\lim_{t\to\infty}\E^\mu\langle\Psi_n(t),\vartheta\rangle^2\\
&=\lim_{t\to\infty}\ff 2 t\sum_{i=1}^n\vartheta_i^2\int_0^t \d s_1\int_{s_1}^t \e^{-B(\ll_i)(s_2-s_1)}\d s_2=\sum_{i=1}^n\ff {2\vartheta_i^2}{B(\ll_i)}.
\end{split}\end{equation*}
Thus, for any $\vartheta\in\R^n$,
$$\lim_{t\to\infty}\E^\nu \e^{\i\langle \Psi_n(t),\vartheta\rangle}=\int_{\R^n}\e^{\i\langle x,\vartheta \rangle}\prod_{i=1}^n N(0,2B(\ll_i)^{-1})(\d x_i)\,\text{ uniformly\ in\ }  \nu\in \scr{P}(C),$$
so that the distribution of $\Psi_n(t)$ under $\P^\nu$ converges weakly to $\prod_{i=1}^n N(0,2B(\ll_i)^{-1})$ as $t\to\infty$. Therefore, letting
$$\Xi_r^{(n)}(t):=\sum_{i=1}^n\ff{|\psi_i(t)|^2}{\ll_i B(\ll_i)\e^{2\ll_i r}},\quad \xi_r^{(n)}:=\sum_{i=1}^n\ff{2\xi_i^2}{\ll_i B(\ll_i)\e^{2\ll_i r}},$$
we have
\beq\label{LKK} \lim_{t\to\infty}\sup_{\nu\in\scr{P}(C)}|\P^\nu(\Xi_r^{(n)}(t)<a)-\P(\xi_r^{(n)}<a)|=0,\quad a\ge 0.\end{equation}

On the other hand, by $\eqref{ee:equ2}$ and $\eqref{ee:equ3}$, we find some constant $C_1>0$ such that
\begin{equation*}\begin{split}
&\sup_{\nu\in\scr{P}(C)}\E^\nu|\Xi_r(t)-\Xi_r^{(n)}(t)|\\
&=\ff 2 t \sup_{\nu\in\scr{P}(C)}\sum_{i=n+1}^\infty\ff{\e^{-2\ll_i r}}{\ll_i B(\ll_i)}\int_0^t\nu(P_s^B\phi_i^2)(1-\e^{-B(\ll_i)(t-s)})\d s\le C_1\vv_n,
\end{split}\end{equation*}
where $\vv_n:=2\sum_{i=n+1}^\infty\ff 2 {\ll_i B(\ll_i) \e^{2\ll_i r}}\to 0$ as $n\to\infty$. This together with \eqref{LKK} implies \eqref{P261}.
\end{proof}

    \section{Some lemma}
From now on, we assume that $M$ is compact.  For any $q\ge p\ge 1$, let $\|\cdot\|_{p\to q}$ be the operator norm from $L^p(\mu)$ to $L^q(\mu)$. When $p=q$, we simply denote $\|\cdot\|_p=\|\cdot\|_{p\to p}.$
Then  there exist constants $\kappa,\lambda>0$ such that
\begin{equation}\label{ee:itr71}
\| {P}_t-\mu\|_{p\to q} \leq\kappa(1\wedge t)^{-\frac{d}{2}(p^{-1}-q^{-1})}\e^{-\ll_1 t},\quad t>0,q\geq p\geq 1.
\end{equation}
 Next, by the triangle inequality of $\W_2$, we obtain
\begin{equation}\label{ewine}\E[\W_2(\mu_t^B,\mu)^2]\leq(1+\varepsilon)\E[\W_2(\mu_{t,r}^B,\mu)^2]+(1+\varepsilon^{-1})\E[\W_2(\mu_t^B,\mu_{t,r}^B)^2],\quad \varepsilon>0.\end{equation}
As shown in \cite{eWZ} for $B(\ll)=\ll$ that, to prove Theorem \ref{T1.1}, we need to estimate $\E[\W_2(\mu_t^B,\mu_{t,r}^B)^2]$ and to refine the estimate on $\E[\W_2(\mu_{t,r}^B,\mu)^2]$ for compact $M$.
These are included in   the following  lemmas.

\begin{lem}\label{lemma31} Let $B\in \BB$ and
 $\mu_{t,r,\varepsilon}^B=(1-\varepsilon)\mu_{t,r}^B+\varepsilon\mu,\varepsilon\in[0,1]$.
There exists a constant~$c>0$ such that
\begin{equation}\label{lem310}
\E^\nu[\W_2(\mu_t^B,\mu_{t,r}^B)^2]\leq c\|h_\nu\|_\infty r,\quad \nu=h_\nu\mu,
\end{equation}
\begin{equation}\label{lem311}
\W_2(\mu_{t,r,\varepsilon}^B,\mu_{t,r}^B)^2\leq c\varepsilon,\quad t,r\geq 0,\varepsilon\in[0,1].
\end{equation}
\end{lem}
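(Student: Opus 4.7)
Both estimates will be established by exhibiting explicit couplings built directly from the dynamics, rather than via the $\W_2$-gradient bound of Lemma \ref{ee:lem1}.

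For \eqref{lem310}, fix the subordinated path $X^B$. For each $s\in[0,t]$ attach an independent copy of the $L$-diffusion of length $r$ started at $X_s^B$; conditioned on $X^B$ this yields the probability measure
$$
\pi:=\ff 1 t\int_0^t\bigl(\dd_{X_s^B}\otimes P_r(X_s^B,\cdot)\bigr)\,\d s\in\C(\mu_t^B,\mu_{t,r}^B)
$$
on $M\times M$, whose marginals are $\mu_t^B$ and $\mu_{t,r}^B=\mu_t^B P_r$. Hence pathwise
$$
\W_2(\mu_t^B,\mu_{t,r}^B)^2\le\ff 1 t\int_0^t\int_M\rho(X_s^B,y)^2 P_r(X_s^B,\d y)\,\d s.
$$
A uniform second-moment bound $\int_M\rho(x,y)^2 P_r(x,\d y)\le Cr$, $x\in M$, $r\ge 0$ (discussed below), then gives $\W_2(\mu_t^B,\mu_{t,r}^B)^2\le Cr$ pathwise. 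Taking $\E^\nu$ and using $\|h_\nu\|_\infty\ge\mu(h_\nu)=1$ yields \eqref{lem310}.

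For \eqref{lem311} the argument is purely geometric and pathwise in the trajectory of $X^B$. Using the mixture coupling
$$
\pi:=(1-\varepsilon)(\Id\times\Id)_\#\mu_{t,r}^B+\varepsilon\,\mu\otimes\mu_{t,r}^B,
$$
whose marginals are $\mu_{t,r,\varepsilon}^B=(1-\varepsilon)\mu_{t,r}^B+\varepsilon\mu$ and $\mu_{t,r}^B$, the diagonal component contributes no transport cost, and $\rho$ is bounded on compact $M$, so
$$
\W_2(\mu_{t,r,\varepsilon}^B,\mu_{t,r}^B)^2\le\varepsilon\int_{M\times M}\rho(x,y)^2\mu(\d x)\mu_{t,r}^B(\d y)\le\varepsilon\,\mathrm{diam}(M)^2,
$$
which is \eqref{lem311} with $c=\mathrm{diam}(M)^2$.

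The only genuinely technical point is the uniform short-time bound $\E^x[\rho(x,X_r)^2]\le Cr$ used in part (1). Because $\rho(x,\cdot)^2$ is only semiconcave (failing $C^2$ across the cut locus, and at $\pp M$ when the boundary is non-empty), a direct Itô-formula argument requires smoothing. The cleanest route is to integrate $\rho^2$ against the Gaussian-type heat-kernel bound $p_r(x,y)\le Cr^{-d/2}\e^{-\rho(x,y)^2/(Cr)}$, valid for small $r$ in both the closed case and the reflecting-boundary case, and then extend to all $r\ge 0$ using the finite diameter of $M$. Once this moment bound is in hand, the proof of the lemma reduces to the routine coupling computations above.
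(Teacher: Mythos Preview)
Your proof is correct and uses the same explicit couplings as the paper for both \eqref{lem310} and \eqref{lem311}. The only minor difference is that for \eqref{lem310} you invoke the uniform bound $\sup_{x\in M}\E^x[\rho(x,X_r)^2]\le Cr$ (via heat-kernel estimates) to obtain a pathwise inequality, whereas the paper first takes $\E^\nu$, uses $\nu\le\|h_\nu\|_\infty\mu$ and the $P_s^B$-invariance of $\mu$, and then applies the averaged bound $\E^\mu[\rho(X_0,X_r)^2]\le c_1 r$ from \cite[Lemma~3.1]{eWZ}; your route is slightly stronger (it makes the factor $\|h_\nu\|_\infty\ge 1$ redundant) but otherwise identical in spirit.
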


\begin{proof}
Since for $t>0$,
$$\pi_t( \d x,dy):=\left(\frac{1}{t}\int_0^tp_r(x,y)\delta_{X^B_{s}} ( \d x)\,\d s\right)\mu( \d y)\in\mathscr{C}(\mu_t^B,\mu_{t,r}^B),$$
we have
\begin{equation}\label{lem313}\begin{split}
&\W_2(\mu_{t,r}^B,\mu_t^B)^2\leq\int_M\rho(x,y)^2\,\pi_t( \d x, \d y)\\
&=\frac{1}{t}\int_0^t\,\d s\int_M  p_r(X^B_s,y)\,\rho(X^B_s,y)^2\mu( \d y).
\end{split}\end{equation}
Since $\nu=h_\nu\mu$, by the $P_t^B$-invariance of $\mu$, we find a constant $c_1>0$ such that
\begin{equation}\begin{split}
&\E^\nu\int_M  p_r(X^B_s,y)\rho(X^B_s,y)^2\mu( \d y)\leq\|h_\nu\|_\infty\mu\left[P_s^B\left(\int_M p_r(x,y)\rho(\cdot,y)^2\mu( \d y)\right)\right]\\
&=\|h_\nu\|_\infty\E^\mu[\rho(X_0,X_r)^2]\leq c_1\|h_\nu\|_\infty r,\ \ s\ge 0,
\end{split}\end{equation}
where   the last step is due to \cite[Lemma 3.1]{eWZ}. Substituting this into \eqref{lem313}, we prove \eqref{lem310}.

On the other hand, let  $D$ be  the diameter of $M$. Since
$$\pi( \d x, \d y):=(1-\varepsilon)\mu_{t,r}^B( \d x)\delta_x( \d y)+\varepsilon\mu( \d x)\mu_{t,r}^B( \d y)\in\mathscr{C}(\mu_{t,r,\varepsilon}^B,\mu_{t,r}^B),$$
we obtain
$$\W_2(\mu_{t,r,\varepsilon}^B,\mu_{t,r}^B)^2\leq\int_{M\times M}\rho(x,y)^2\pi( \d x, \d y)\leq\varepsilon D^2,\ \ t,r>0, \vv\in [0,1].$$
Then the proof is finished.
\end{proof}


\begin{lem}\label{lem32} Let $B\in \BB^\aa$ for some $\aa\in [0,1],$ and let   $d<2(1+\alpha)$.
\beg{enumerate} \item[$(1)$] For any $q\in(\frac{d}{2}\vee 1,\frac{d}{d-2\alpha})$, there exists a constant $c>0$, such that
\begin{equation}\label{lem320}
\sup_{y\in M}\E^\mu[|f_{t,r}(y)-1|^2]\leq\frac{c}{t r^{\frac{d}{2q}}},\quad t\geq 1,r\in(0,1].
\end{equation}
 \item[$(2)$]  For any  $q\in(\frac{d}{2}\vee 1,\frac{d}{d-2\alpha})$ and $\gamma\in (1,\frac{2q}{d})$,
\begin{equation}\label{lem331}\lim_{t\to\infty}\sup_{y\in M}\E^\mu[|\mathscr{M}((1-t^{-\gg})f_{t,t^{-\gg}}(y)+t^{-\gg},1)^{-1}-1|^p]=0,\quad p>0.\end{equation}
\end{enumerate}
\end{lem}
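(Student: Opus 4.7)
\emph{Plan for Part (1).} Starting from
$$f_{t,r}(y)-1 = \ff{1}{t}\int_0^t [p_r(X_s^B,y)-1]\,\d s,$$
I would use the spectral expansion $p_r(\cdot,y)-1=\sum_{i\ge 1}\e^{-r\ll_i}\phi_i(y)\phi_i$, then square, take $\E^\mu$, and invoke the Markov property, the $\mu$-invariance of $P_s^B$ and the eigenrelation $P_s^B\phi_i=\e^{-sB(\ll_i)}\phi_i$. The computation mirrors that in the proof of Lemma~\ref{ee:lem2}, modified by an extra factor $\e^{-r\ll_i}\phi_i(y)$ per mode, and yields
$$\E^\mu|f_{t,r}(y)-1|^2 \le \ff{2}{t}\sum_{i\ge 1}\ff{\e^{-2r\ll_i}\phi_i(y)^2}{B(\ll_i)}.$$
I would then apply H\"older's inequality with conjugate exponents $q$ and $q'=q/(q-1)$:
$$\sum_i \ff{\e^{-2r\ll_i}\phi_i(y)^2}{B(\ll_i)} \le \Big(\sum_i \e^{-2r\ll_i}\phi_i(y)^2\Big)^{1/q}\Big(\sum_i \ff{\e^{-2r\ll_i}\phi_i(y)^2}{B(\ll_i)^{q'}}\Big)^{1/q'}.$$
The first factor equals $(p_{2r}(y,y)-1)^{1/q}\le Cr^{-d/(2q)}$ by the Gaussian upper heat kernel bound on the compact manifold $M$. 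For the second, $B\in\BB^\aa$ gives $B(\ll_i)^{-q'}\le c\ll_i^{-\aa q'}$, and the subordination identity $\ll_i^{-\aa q'}=\Gamma(\aa q')^{-1}\int_0^\infty s^{\aa q'-1}\e^{-s\ll_i}\d s$ transforms the sum into $\int_0^\infty s^{\aa q'-1}[p_{2r+s}(y,y)-1]\,\d s$, which is finite uniformly in $y\in M$ and $r\in(0,1]$ precisely when $\aa q'>d/2$, equivalently $q<d/(d-2\aa)$.

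\emph{Plan for Part (2).} Put $g:=(1-t^{-\gg})f_{t,t^{-\gg}}(y)+t^{-\gg}$; then $g-1=(1-t^{-\gg})(f_{t,t^{-\gg}}(y)-1)$, and
$$h(x):=\mathscr{M}(x,1)^{-1}-1=\ff{\log x-(x-1)}{x-1}$$
extends smoothly to $x=1$ with $h(1)=0$, so $|h(x)|\le C|x-1|$ for $|x-1|\le 1/2$. Since $f_{t,r}\le\|P_r\|_{1\to\infty}\le Cr^{-d/2}$ deterministically, one has $g\in[t^{-\gg},\,Ct^{\gg d/2}]$, yielding the global deterministic bound $|h(g)|\le C(1+\log t)$ (by inspecting the three regimes $g$ small, $g$ near $1$, and $g$ large). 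For any $\vv\in(0,1/2)$,
$$\E^\mu|h(g)|^p \le (C\vv)^p + C(1+\log t)^p\,\P^\mu\big(|f_{t,t^{-\gg}}(y)-1|>\vv\big),$$
and Chebyshev together with Part~(1) at $r=t^{-\gg}$ give $\P^\mu(|f_{t,t^{-\gg}}(y)-1|>\vv)\le c\vv^{-2}t^{-1+\gg d/(2q)}$. Since $\gg<2q/d$, the exponent is strictly negative and $(\log t)^p t^{-1+\gg d/(2q)}\to 0$; letting $t\to\infty$ and then $\vv\to 0$ concludes, uniformly in $y$ since every estimate is $y$-uniform.

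\emph{Main obstacle.} The sharp exponent $r^{-d/(2q)}$ in Part~(1) hinges on the constraint $q<d/(d-2\aa)$, which is exactly what ensures integrability of $s^{\aa q'-1}(2r+s)^{-d/2}$ at $s=0$ after the subordination step; this constraint also makes the $q$-interval non-empty under the hypothesis $d<2(1+\aa)$. The case $\aa=1$ is handled identically via $\ll_i^{-1}=\int_0^\infty\e^{-s\ll_i}\d s$. In Part~(2) the delicate point is absorbing the $(\log t)^p$ amplification coming from the deterministic bound $g\in[t^{-\gg},Ct^{\gg d/2}]$; this is possible because the constraint $\gg<2q/d$ (equivalently $q>d/2$) produces strict polynomial decay in the Chebyshev estimate, overwhelming the logarithmic factor.
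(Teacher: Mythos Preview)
Your proposal is correct in both parts, and in each part you take a route that differs from the paper's.

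\textbf{Part (1).} The paper does not expand $p_r(\cdot,y)-1$ spectrally. Instead it writes $\E^\mu[f(X_{r_1}^B)f(X_{r_2}^B)]=\mu(fP_{r_2-r_1}^Bf)$ for $f=p_r(\cdot,y)-1$ and applies H\"older's inequality in $L^p(\mu)$: the factor $\|f\|_{q/(q-1)}$ yields $r^{-d/(2q)}$ via the heat-kernel bound, while $\|P_s^B\|_{1\to q}$ is estimated by subordination as $c(s^{-\frac{d(q-1)}{2\aa q}}+1)$, integrable in $s$ precisely when $q<d/(d-2\aa)$. Your approach instead diagonalises first (using that under $\E^\mu$ the cross terms $\E^\mu[\psi_i(t)\psi_j(t)]$ vanish by orthogonality), then applies H\"older on the eigenvalue series with the weight $\e^{-2r\ll_i}\phi_i(y)^2$, and recovers the same constraint $\aa q'>d/2$ through the Gamma integral representation of $\ll_i^{-\aa q'}$. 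Both routes produce the same exponent $r^{-d/(2q)}$; yours is somewhat cleaner in that it avoids estimating the subordinated operator norm $\|P_s^B\|_{1\to q}$, while the paper's has the advantage of not relying on the cancellation of off-diagonal terms (which is specific to the initial law $\mu$).

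\textbf{Part (2).} The paper invokes \cite[Lemma~3.3]{eWZ} to obtain, for a small auxiliary $\theta>0$, a bound of the form $\dd_\eta+(1+\theta^{-1}r^{-\theta/2})^p\,\P^\mu(|f_{t,r}(y)-1|>\eta)$, and then chooses $\theta$ so that $\gg(\frac{d}{2q}+\frac{\theta p}{2})<1$. Your argument replaces this by the direct deterministic observation that $g\in[t^{-\gg},Ct^{\gg d/2}]$ forces $|\scr M(g,1)^{-1}-1|\le C(1+\log t)$, which is more elementary and removes the extra parameter $\theta$; the polynomial decay $t^{-1+\gg d/(2q)}$ from Chebyshev and Part~(1) then absorbs the logarithm. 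Both arguments are valid; yours is more self-contained, the paper's has the minor advantage of giving a slightly more quantitative bound at each finite $t$.
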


\begin{proof} (1) For fixed $y\in M$, simply denote   $f = p_r(\cdot,y)-1$.   Then
\beq\label{*PP} \E^\mu \big[|f_{t,r}-1|^2\big] =\ff {2  } {t^2} \E^\mu \int_{0}^t f(X_{r_1}^B)\d r_1 \int_{r_1}^t   f(X_{r_2}^B) \d r_2,\end{equation}
 Since  $P_t^B$   is invariant with respect to $\mu$, we obtain
\begin{equation}\label{lem321}\begin{split}
&\E^\mu[f(X^B_{r_1})f(X^B_{r_2})]=\mu\big(P^B_{r_1}(fP^B_{r_2-r_1}f)\big) =\mu(f  P^B_{r_2-r_1}f)\\
  &\leq \|f\|_{\ff q {q-1}}\|P_{r_2-r_1}^B f\|_q \le \|f\|_{\ff q {q-1}}  \|P^B_{\frac{r_2-r_1}{2}}\|_{1\to q}\|P^B_{\frac{r_2-r_1}{2}}f\|_1,\ \ r_2>r_1\ge 0.
\end{split}\end{equation}
By   $f= p_r(\cdot,y)-1$ and \eqref{ee:itr71}, we find some constants $c_1>0$ such that
\begin{equation}\label{lem3211}\begin{split}
\|f\|_{\ff q {q-1}}\leq 1+\| p_{r}(\cdot,y)\|_{\ff q {q-1}}\leq 1+\| P_{\ff r {2}}\|_{1\to \ff q {q-1}}\leq c_1 r^{-\frac{d}{2q}},\quad r\in(0,1],q\geq 1.
\end{split}\end{equation}
Moreover, since $P_t^B$ is the semigroup of $X_t^B:=X_{S_t^B},$ by \eqref{ee:itr71} and noting that $B\in \BB^\aa$ implies
\beq\label{BR} B(r)\ge k_0 (r\land r^\aa)\ge k_1 r^\aa -k_2,\ \ r\ge 0\end{equation} for some constants $k_0, k_1,k_2>0$, we find a constant $c_2>0$ such that
\begin{align*}
&\|P^B_{r}\|_{1\to q}\leq \E \| P_{S_r^B} \|_{1\to q} \le c  \E\big[ (1\wedge S_r^B)^{-\frac{d(q-1)}{2q}} \big]\\
&\le c + c  \E\big[   (S_r^B)^{-\frac{d(q-1)}{2q}} \big] = c +\ff c {\GG(\frac{d(q-1)}{2q})} \int_0^\infty t^{\frac{d(q-1)}{2q}-1} \e^{-r B(t)}\d t \\
& \leq c_2(r^{-\frac{d(q-1)}{2\aa q}}+1),\ \ r>0.
\end{align*}
Since $\ff{d(q-1)}{2\aa q} <1$, by combining this with \eqref{EXP}, \eqref{*PP} and  \eqref{lem3211}, we find   constants $c_3,c_4>0$ such that
 \begin{align*}
 \E^\mu[|f_{t,r}(y)-1|^2]
\leq \ff{c_3}{r^{\ff d{2q} } t^2} \int_0^t \d r_1\int_{r_1}^t  ((r_2-r_1)^{-\ff{d(q-1)}{2\aa q}}+1)\e^{-\lambda_1^\alpha(r_2-r_1)}\,\d r_2
 \le \ff{c_4}{r^{\ff d{2q} } t},\ \ t,r>0.
\end{align*}

(2) Let $\theta>0$ be small enough such that  $\gamma(\frac{d}{2q}+\frac{\theta p}{2})< 1$.
According to the proof of the \cite[Lemma 3.3]{eWZ},    there exists a map $C: (0,1)\to (0,\infty)$ such that
\begin{align*}
&\sup_{y\in M}\E^\mu[|\mathscr{M}((1-r)f_{t,r}(y)+r,1)^{-1}-1|^p]\leq\delta_\eta+(1+\theta^{-1}r^{-\frac{\theta}{2}})^p\sup_{y\in M}\P^\mu(\{|f_{t,r}(y)-1|> \eta\})\\
&\leq\delta_\eta+C(\eta)  t^{-1}r^{-\frac{d}{2p}-\frac{\theta p}{2}},\ \ t\ge 1, r,\eta \in (0,1),
\end{align*}
holds for
$\delta_\eta=\left|\frac{1}{\sqrt{1-\eta}}-\frac{2}{2+\eta}\right|^q,\ \eta\in(0,1).$
This implies \eqref{lem331} by taking $r= t^{-\gg}$ and letting first $t\to\infty$ then $\eta\to 0.$ \end{proof}

\begin{lem}\label{lem34} Let $B\in \BB^\aa$ for some $\aa\in [0,1].$
For any  $p\in[1,2]$,    there exists a constant $c>0$ such that
\begin{equation}\label{lem341}
\E^\mu[|\psi_i(t)|^{2p}]\leq c \lambda_i^{\alpha(p-2)+(p-1)(\frac{d}{2}-2\alpha)},\ \ i\ge 1.
\end{equation}
\end{lem}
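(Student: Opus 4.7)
The plan is to establish \eqref{lem341} at the endpoints $p=1$ and $p=2$ by direct computation, and then interpolate for $p\in(1,2)$ via H\"older's inequality. I work throughout with $t\ge 1$.

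For $p=1$, I would specialise \eqref{ee:equ3} to $\nu=\mu$. Invariance of $\mu$ under $P_s^B$ gives $\mu(P_s^B\phi_i^2)=\mu(\phi_i^2)=1$, whence
\[
\E^\mu|\psi_i(t)|^2=\frac{2}{B(\ll_i)t}\int_0^t\bigl(1-\e^{-B(\ll_i)(t-s)}\bigr)\,\d s\le\frac{2}{B(\ll_i)}.
\]
Since $B\in\BB^\aa$ yields $B(\ll_i)\ge c\ll_i^\aa$ uniformly in $i\ge 1$ (combining the asymptotic at infinity with $B(\ll_1)>0$), we conclude $\E^\mu|\psi_i(t)|^2\le c'\ll_i^{-\aa}$, the $p=1$ case of \eqref{lem341}.

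For $p=2$, I would expand
\[
\E^\mu|\psi_i(t)|^4=\frac{4!}{t^2}\int_{\Delta_2(t)}\E^\mu\Bigl[\prod_{j=1}^4\phi_i(X_{s_j}^B)\Bigr]\,\d s,\qquad\Delta_2(t)=\{0\le s_1\le\cdots\le s_4\le t\},
\]
and collapse the 4-point correlation using the Markov property, the eigenfunction identity $P_s^B\phi_i=\e^{-B(\ll_i)s}\phi_i$, and self-adjointness of $P_s^B$ in $L^2(\mu)$ (applied at the two extreme intervals $[s_1,s_2]$ and $[s_3,s_4]$). This reduces the inner expectation to the clean product form
\[
\e^{-B(\ll_i)[(s_2-s_1)+(s_4-s_3)]}\,\mu\bigl(\phi_i^2\,P^B_{s_3-s_2}\phi_i^2\bigr).
\]
Cauchy--Schwarz gives $\mu(\phi_i^2 P^B_b\phi_i^2)\le\|\phi_i^2\|_2^2=\|\phi_i\|_4^4$. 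For the $L^4$-norm I would write $\phi_i=\e\cdot P_{1/\ll_i}\phi_i$ (valid since $P_t\phi_i=\e^{-t\ll_i}\phi_i$) and invoke \eqref{ee:itr71} in the $L^2\to L^4$ direction, yielding $\|\phi_i\|_4\le C\ll_i^{d/8}$ and hence $\|\phi_i\|_4^4\le C'\ll_i^{d/2}$. Integrating out the two $B(\ll_i)$-exponentials in the simplex (each contributing $B(\ll_i)^{-1}$) and bounding the remaining two free time variables by $t$, the $t^{-2}$ prefactor cancels and produces
\[
\E^\mu|\psi_i(t)|^4\le c\,\ll_i^{d/2}B(\ll_i)^{-2}\le c'\ll_i^{d/2-2\aa},
\]
the $p=2$ case of \eqref{lem341}.

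For $p\in(1,2)$, setting $\theta=2-p\in(0,1)$, the pointwise identity $|\psi_i|^{2p}=(|\psi_i|^2)^{\theta}(|\psi_i|^4)^{1-\theta}$ together with H\"older's inequality at conjugate exponents $1/\theta,1/(1-\theta)$ gives
\[
\E^\mu|\psi_i(t)|^{2p}\le\bigl(\E^\mu|\psi_i(t)|^2\bigr)^{2-p}\bigl(\E^\mu|\psi_i(t)|^4\bigr)^{p-1},
\]
and substituting the two endpoint bounds yields the advertised exponent $\aa(p-2)+(p-1)(d/2-2\aa)$. The main technical hurdle is the 4-point correlation reduction: iteratively extracting \emph{two} factors of $B(\ll_i)^{-1}$ from the simplex integral (rather than just one) is what produces the tight $B(\ll_i)^{-2}$ and hence the $-2\aa$ in the target exponent, and this requires using both the Markov structure of $X_t^B$ and the $L^2(\mu)$-self-adjointness of $P_s^B$ at the two ``outer'' time increments while leaving the middle piece as the correlator $\mu(\phi_i^2 P_b^B\phi_i^2)$ to be controlled by ultracontractivity.
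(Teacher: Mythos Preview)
Your argument is correct. The $p=1$ endpoint and the H\"older interpolation for $p\in(1,2)$ coincide with the paper's proof. At $p=2$ you take a genuinely different route: the paper does not expand the four-point correlator directly but instead invokes the general moment inequality \eqref{ee:equ8} (established earlier via \cite[Lemma~2.5]{eWZ}), which bounds $t^2\E^\mu|\psi_i(t)|^4$ by the square of a two-dimensional simplex integral of $(\E^\mu|g(r_1,r_2)|^2)^{1/2}$ with $g(r_1,r_2)=\e^{-(r_2-r_1)B(\ll_i)}\phi_i^2(X^B_{r_1})$, and then controls $\sqrt{\mu(\phi_i^4)}\le\|\phi_i\|_\infty\le c\ll_i^{d/4}$ through the $L^2\to L^\infty$ ultracontractivity bound. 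Your exact computation of the four-point function via iterated Markov property together with the $L^2(\mu)$-self-adjointness of $P^B$ is more self-contained (no appeal to an external moment lemma) and needs only the weaker $L^2\to L^4$ eigenfunction bound; the paper's route is shorter once \eqref{ee:equ8} is already on record and extends mechanically to all even moments. Both reach the same endpoint estimate $\E^\mu|\psi_i(t)|^4\le c\ll_i^{d/2-2\aa}$.
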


\begin{proof} Since $P_t^B\phi_i=\e^{-B(\ll_i) t}\phi_i$, we have
 \begin{equation}\label{lem342}
g(r_1,r_2):=(\phi_i P^B_{r_2-r_1}\phi_i)(X^B_{r_1})=\e^{-(r_2-r_1)B(\lambda_i)}\phi_i(X^B_{r_1})^2.
\end{equation}
By \eqref{ee:equ8},    $\mu (P^B_{r_1} \phi_i^2) =\mu(\phi_i^2)=1$ and \eqref{BR},  we find a constant $c_1>0$ such that
\begin{equation}\label{lem343}\begin{split}
&t\E^\mu[|\psi_i(t)|^2]\leq c_1\int_0^t \d r_1\int_{r_1}^t\E^\mu[g(r_1,r_2)]\,\d r_2\\
&=c_1\int_0^t \d r_1\int_{r_1}^t \e^{-(r_2-r_1)B(\ll_i)}\mu(P^B_{r_1}\phi_i^2)\,\d r_2\leq   \frac{c_1t}{\lambda_i^\alpha},\quad t\geq 1,i\in\mathbb{N}.
\end{split}\end{equation}
On the other hand, by \eqref{ee:equ8}, \eqref{BR} and \eqref{lem342}, we find a constant $c_2>0$ such that
\begin{equation}\label{lem344}\begin{split}
&t^2\E^\mu[|\psi_i(t)|^4]\leq c_2 \left(\int_0^t \d r_1\int_{r_1}^t(\E^\mu[|g(r_1,r_2)|^2])^{\frac{1}{2}}\,\d r_2\right)^2\\
&\le c_2\left(\int_0^t \d r_1\int_{r_1}^t \e^{-(r_2-r_1)\lambda_i^\alpha}\sqrt{\mu( P^B_{r_1}\phi_i^4)}\,\d r_2\right)^2.
\end{split}\end{equation}
Moreover, \eqref{ee:itr71} and  $P_t\phi_i=\e^{-\lambda_i t}\phi_i$ yield
$$
\|\phi_i\|_\infty=\inf_{t>0}\{\e^{\lambda_i t}\|P_t\phi_i\|_\infty\}\leq\inf_{t>0}\{\e^{\lambda_it}\|P_t\|_{2\to\infty}\}\leq c_3\lambda_i^{\frac{d}{4}},\quad i\geq 1
$$ for some constant $c_3>0$, so that
 $$
\sqrt{\mu(P_r^B\phi_i^4)}=\sqrt{\mu(\phi_i^4)}\leq\sqrt{\|\phi_i\|_\infty^2\mu(\phi_i^2)}
 \leq c_3\lambda_i^{\frac{d}{4}},\quad i\geq 1.
$$
This together with  \eqref{lem344} implies  that for some   constant $c_4>0$
$$\E^\mu[|\psi_i(t)|^4]\leq c_4  \lambda_i^{\frac{d}{2}-2\alpha},\ \ i\ge 1.$$
Combining this with  \eqref{lem343} and using H\"{o}lder's inequality, we find a constant $c_5>0$ such that
\begin{align*}
&\E^\mu[|\psi_i(t)|^{2p}]=\E^\mu[|\psi_i(t)|^{4-2p}|\psi_i(t)|^{4(p-1)}]\\
&\leq(\E^\mu[|\psi_i(t)|^{2}])^{2-p}(\E^\nu[|\psi_i(t)|^{4}])^{p-1}\leq c_5 \lambda_i^{\alpha(p-2)+(p-1)(\frac{d}{2}-2\alpha)}.
\end{align*}
 \end{proof}

\begin{lem}\label{lem35} Let $B\in \BB^\aa$ for some $\aa\in [0,1].$
If $d< 2(1+\alpha)$, then there exists a constant $p>1$ such that
$$\limsup_{t\to\infty}\sup_{r>0 }\bigg\{t^p\E^\mu\int_M|\nabla L^{-1}(f_{t,r}-1)|^{2p}\,d\mu\bigg\}<\infty.$$
\end{lem}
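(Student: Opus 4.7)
The plan is to combine a Sobolev embedding with Minkowski's inequality and the moment bounds from Lemma \ref{lem34}. At $p=1$ the spectral identity gives
$$t\,\E^\mu \int_M |\nn L^{-1}(f_{t,r}-1)|^2\,\d\mu = \sum_{i\ge 1} \ff{\e^{-2r\ll_i}}{\ll_i}\,\E^\mu|\psi_i(t)|^2 \le c\sum_{i\ge 1}\ll_i^{-(1+\aa)},$$
which is finite uniformly in $r>0$ precisely because $d<2(1+\aa)$. The task is to upgrade this to an $L^{2p}$-bound for some $p>1$ without losing uniformity in $r$.

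Fix $p\in(1,2]$ (to be determined) and set $s:=\ff{d(p-1)}{2p}$. Applying the Sobolev embedding $H^{s+1,2}(M)\hookrightarrow W^{1,2p}(M)$ on the compact Riemannian manifold $M$ to the mean-zero function $u:=L^{-1}(f_{t,r}-1)$ gives $\|\nn u\|_{L^{2p}(\mu)}^{2p}\le C\|u\|_{H^{s+1,2}}^{2p}$. Since $u=-\ff{1}{\sqrt t}\sum_{i\ge 1}\ff{\e^{-r\ll_i}}{\ll_i}\psi_i(t)\phi_i$ and $\{\phi_i\}$ diagonalises $-L$, the spectral definition of $H^{s+1,2}$ yields
$$\|u\|_{H^{s+1,2}}^{2}\asymp \ff{1}{t}\sum_{i\ge 1}\ll_i^{s-1}\e^{-2r\ll_i}\psi_i(t)^{2},$$
and Minkowski's inequality in $L^p(\P^\mu)$ applied to the positive series then gives
$$\E^\mu\|u\|_{H^{s+1,2}}^{2p}\le \ff{C}{t^p}\bigg(\sum_{i\ge 1}\ll_i^{s-1}\e^{-2r\ll_i}(\E^\mu \psi_i(t)^{2p})^{1/p}\bigg)^{\!p}.$$
Invoking Lemma \ref{lem34} in the form $(\E^\mu\psi_i^{2p})^{1/p}\le c\ll_i^{d(p-1)/(2p)-\aa}$ and then \eqref{*2}, the sum $\sum_i\ll_i^{s-1+d(p-1)/(2p)-\aa}\e^{-2r\ll_i}$ is bounded uniformly in $r>0$ precisely when $s+\ff{d(p-1)}{2p}<1+\aa-\ff{d}{2}$. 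Substituting $s=\ff{d(p-1)}{2p}$ this reduces to $\ff{d(p-1)}{p}<1+\aa-\ff{d}{2}$, and since $d<2(1+\aa)$ makes the right-hand side strictly positive, the condition holds for every $p\in(1,2]$ sufficiently close to $1$. Combining the steps yields $t^p\,\E^\mu\|\nn u\|_{L^{2p}}^{2p}\le C'$ uniformly in $r>0$ and $t\ge 1$.

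The main technical point is a clean justification of the Sobolev embedding $H^{s+1,2}(M)\hookrightarrow W^{1,2p}(M)$ in the compact-with-boundary case, where one must identify the spectral Sobolev scale built from the functional calculus of $-L$ (carrying its intrinsic Neumann data) with the classical Sobolev space on $M$. This is standard but slightly tedious; an equivalent route is to first use the $L^q$-boundedness ($1<q<\infty$) of the Riesz transform $\nn(-L)^{-1/2}$ and then apply the scalar Sobolev embedding to $(-L)^{-1/2}(f_{t,r}-1)$. The upper constraint $p\le 2$ from Lemma \ref{lem34} is harmless since we only need $p-1$ small anyway.
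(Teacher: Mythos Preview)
Your argument is correct and takes a genuinely different route from the paper's. The paper quotes the estimate from \cite[Lemma~3.5]{eWZ},
\[
t^p\,\E^\mu\!\int_M|\nabla L^{-1}(f_{t,r}-1)|^{2p}\,\d\mu
\;\le\;c_1(p,\vv)\sum_{i\ge 1} i^{\vv}\,\ll_i^{\frac{d(p-1)}{2}-1}\,\E^\mu[|\psi_i(t)|^{2p}],
\]
which is linear in the moments $\E^\mu[|\psi_i|^{2p}]$ and is obtained (in \cite{eWZ}) by a weighted H\"older inequality together with the pointwise eigenfunction bounds $\|\nabla\phi_i\|_\infty\lesssim\ll_i^{(d+2)/4}$; one then plugs in Lemma~\ref{lem34} and checks that the resulting power of $i$ is summable for $p$ close to $1$ and $\vv>p-1$ small. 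Your approach replaces that black box by the Sobolev embedding $H^{s+1,2}\hookrightarrow W^{1,2p}$ combined with Minkowski in $L^p(\P^\mu)$, which produces a $(\sum_i\cdots)^p$-type bound instead; after inserting Lemma~\ref{lem34} in the form $(\E^\mu|\psi_i|^{2p})^{1/p}\le c\,\ll_i^{d(p-1)/(2p)-\aa}$, the summability condition you obtain, $\ff{d(p-1)}{p}<1+\aa-\ff d 2$, is indeed satisfiable for $p>1$ close to $1$ exactly when $d<2(1+\aa)$. Both routes ultimately rest on Lemma~\ref{lem34} and on \eqref{*2}.

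What each buys: the paper's route is more elementary in that it needs only H\"older and the sup-norm bounds on eigenfunctions, which are direct consequences of \eqref{ee:itr71}; it avoids any discussion of fractional Sobolev spaces or Riesz transforms in the Neumann setting. Your route is cleaner conceptually and would transfer more readily to other operators, but it does require the identification of the spectral Sobolev scale with the classical one (or, equivalently, the $L^{2p}$-boundedness of $\nabla(-L)^{-1/2}$) on a manifold with boundary. You correctly flag this as the one nontrivial point; note that since you only need $s=\ff{d(p-1)}{2p}$ arbitrarily small, you may take $s<\ff12$, which keeps you below the threshold where Neumann boundary traces interfere, so the identification is indeed routine. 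Alternatively you could take $s$ strictly larger than the critical value $\ff{d(p-1)}{2p}$ by an arbitrarily small amount, so that the embedding follows directly from the semigroup bound \eqref{ee:itr71} via the integral representation of $(-L)^{-s/2}$, at no cost to your summability condition.
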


\begin{proof} According to the proof of \cite[Lemma 3.5]{eWZ}, for any $p>1$ and $\vv>p-1$, there exists a constant $c_1(p,\vv)>0$ such that
$$t^p\E^\mu\int_M|\nabla L^{-1}(f_{t,r}-1)|^{2p}\,d\mu
\leq c_1(p,\vv) \sum_{i=1}^\infty i^\varepsilon\lambda_i^{\frac{d(p-1)}{2}-1}  \E^\mu[|\psi_i(t)|^{2p}].$$
Combining this with Lemma \ref{lem34} and \eqref{*2},    we find a   constant $c_2(p,\vv) >0$ such that
\begin{equation}\label{lem353}
\E^\mu\int_M|\nabla L^{-1}(f_{t,r}-1)|^{2p}\,d\mu\leq c_2(p,\vv) t^{-p} \sum_{i=1}^\infty i^{\delta_{p,\varepsilon}}
\end{equation}
holds for
$$\delta_{p,\varepsilon}:=\varepsilon+\frac{2}{d}\left\{(p-1)\left(d-2\alpha\right)+\alpha p-(2\alpha+1)\right\}.$$
So, it remains to show that $\delta_{p,\varepsilon}<-1$ holds for some constants $p>1$ and $\vv>p-1$.  This follows from the fact that for $\vv>0$ and $p_\vv:=1 +\ff \vv 2 $ we have
$\vv>p_\vv-1$ and
$$\lim_{ \vv\downarrow 0} \dd_{p_\vv,\vv}= -\ff{2(1+\aa)} d<-1.$$
 \end{proof}

Finally, the following lemma reduces arbitrary initial values to initial distributions with bounded density.

\beg{lem}\label{LYP} Let $B\in \BB$ and   $p\in (0,2]$.  Then for any $\vv>0$,
\beg{align*} &\aa_\vv:= \|P_{\vv^2}^B\|_{1\to\infty}<\infty,\\
&\sup_{x\in M} \E^x\big[ \W_p(\mu_t^B,\mu)^{1\lor p} \big]\le (1+\vv)\sup_{\nu\in \scr P(\aa_\vv)}    \E^\nu \big[\W_p(\mu_t^B,\mu)^{1\lor p}\big] +\ff {\vv(1+\vv)D^{p}}t,\\
& \inf_{\nu\in \scr P(\aa_\vv)}    \E^\nu \big[\W_p(\mu_t^B,\mu)^{1\lor p}\big] \le (1+\vv) \inf_{x\in M} \E^x \big[\W_p(\mu_t^B,\mu)^{1\lor p}\big] +\ff {\vv(1+\vv)D^p}t,\ \ t>\vv^2,\end{align*}
where $D$ is the diameter of $M$.
\end{lem}

\beg{proof} There exists a constant $c>0$ such that
$$ \|P_t\|_{1\to\infty}\le c (1+ t^{-\ff d 2}),\ \ t>0.$$
 This together with \eqref{LT} and  $ \int_1^\infty r^{\ff d 2 -1}\e^{-t B(r)}\d r<\infty$ implies
 \beg{align*} &\|P_t^B\|_{1\to\infty} =\sup_{\mu(|f|)\le 1} \sup_{x\in M} |\E^x f(X_{S_t^B})|\le \E\|P_{S_t^B}\|_{1\to\infty}\le c +c \E (S_t^B)^{-\ff d 2}\\
 &=c + \ff{c}{\GG(\ff d 2)} \int_0^\infty r^{\ff d 2 -1} \e^{-t B(r)}\d r<\infty,\ \ t>0.\end{align*}
 Next, for any $x\in M$ and $\vv>0$, let $\nu_{x,\vv}$ be the distribution of $X_{\vv^2}^B$. Then
 $$\Big\|\ff{\d\nu_{x,\vv}}{\d \mu}\Big\|_\infty = \sup_{\mu(|f|)\le 1} |P_{\vv^2}^B f(x)|\le \|P_{\vv^2}^B \|_{1\to\infty}=\aa_\vv,$$ so that
 \beq\label{N*N}  \nu_{x,\vv}\in  \scr P(\aa_\vv),\ \ x\in M, \vv>0.\end{equation}
Let
$$\tt \mu^B_{\vv,t}:= \ff 1 t \int_{\vv^2}^{\vv^2+t} \dd_{X_s^B}\d s,\ \ t>0.$$ By Markov property, we have
 \beq\label{B*B} \E^x \big[\W_2(\tt\mu_{\vv,t}^B,\mu)^2\big]= \E^{\nu_{x,\vv}}\big[\W_2(\mu_t^B,\mu)^2\big],\ \ x\in M, t,\vv>0.\end{equation}
 Moreover, it is easy to see that for any $t>\vv^2>0$,
 $$\pi:=\ff 1 t \int_{\vv^2}^t \dd_{(X_s^B,X_s^B)}\d s +\ff 1 t \int_0^{\vv^2} \dd_{(X_s^B, X_{t+s}^B)}\d s\in \C(\mu_t^B, \tt\mu_{\vv,t}^B),$$
 so that
 $$|\W_p(\mu_t,\mu)- \W_p(\tt\mu_{\vv,t}^B,\mu)|^{1\lor p}\le\big\{ \W_p(\mu_t,\tt\mu_{\vv,t}^B)\big\}^{1\lor p}\le \int_{M\times M} \rr^p\d\pi \le \ff {\vv^2 D^p} t.$$
 Combining this with \eqref{N*N} and \eqref{B*B}, we obtain
\beg{align*} &\sup_{x\in M} \E^x \big[\W_p(\mu_t,\mu)^{1\lor p}\big]\le (1+\vv) \sup_{x\in M} \E^x\big[\W_p(\tt\mu_{\vv,t}^B,\mu)^{1\lor p}\big] +(1+\vv^{-1})\ff{\vv^2D^p}t\\
&\le (1+\vv)\sup_{\nu\in \scr P(\aa_\vv)}    \E^\nu \big[\W_p(\mu_t^B,\mu)^{1\lor p}\big] +\ff {\vv(1+\vv)D^p}t.\end{align*}
Similarly,  the last  estimate also holds.

  \end{proof}

\section{Proof of Theorem \ref{T1.1}}

\subsection{Proof of Theorem \ref{T1.1}(1)}   Since $M$ is compact and $V\in C^2(M)$, there exists a constant $K>0$ such that
$$\textnormal{Ric}_V:=\Ric -\Hess_V\geq -K,$$ where $\Ric$ is the Ricci curvature.

When  $\partial M$ is either convex or empty, then
\beq\label{WW0} \W_p(\mu,\nu  P_r)^2\leq \e^{2Kr}\W_p(\mu,\nu)^2,\quad r>0, p\ge 1,\end{equation}
see for instance \cite{W10,W14}.
Since $\mu_{t,r}^B=\mu_t^B P_r$, this and  \eqref{LPB} imply
\beg{align*} &\e^{2Kr}\liminf_{t\to\infty}\left\{t\inf_{x\in M} \E^x[\W_2(\mu,\mu_{t}^B)^2]\right\}\\
&\geq\liminf_{t\to\infty}\left\{t\inf_{x\in M}\E^x[\W_2(\mu,\mu_{t,r}^B)^2]\right\}\geq\sum_{i=1}^\infty\frac{2}{\lambda_i B(\ll_i)  \e^{2r\lambda_i}},\ \ r>0.\end{align*}
By letting $r\to 0$, we prove the desired estimate for    $c=1$.

When $\pp M$ is non-convex,  the desired inequality follows  by using  the following estimate due to \cite[Theorem 2.7]{eCTT} replacing \eqref{WW0}: there exist  constants $c,\ll>0$ such that
 $$c\W_2(\nu P_r, \mu)\le  \e^{\ll r} \W_2(\nu,\mu),\ \ \nu\in \scr P, r>0.$$

\subsection{Proof of Theorem \ref{T1.1}(2)}    It suffices to prove for $p\in (0,\aa)$.  The proof is modified from that of  the proof of  \cite[Theorem 1.1]{eWZ}, the only difference is that we have to use  $\W_p$ for $p\in (0,\aa)$ replacing $\W_1$, since in this case we have $ \E[(S_t^B)^p]<\infty$.

For any $t\geq 1$ and $N\in\mathbb{N}$, we consider $\mu_N^B:=\frac{1}{N}\sum_{i=1}^N\delta_{X^B_{t_i}}$, where ~$t_i:=\frac{(i-1)t}{N},1\leq i\leq N$.
By taking the Wasserstein coupling
$$\frac{1}{t}\sum_{i=1}^N\int_{t_i}^{t_{i+1}}\delta_{X^B_s}( \d x)\delta_{X^B_{t_i}}( \d y)\,\d s\in\mathscr{C}(\mu_t^B,\mu_N^B),$$
we obtain
$$\W_p(\mu_t^B,\mu_N^B)\le \frac{1}{t}\sum_{i=1}^N\int_{t_i}^{t_{i+1}}\rho(X^B_{s},X^B_{t_i})^p\,\d s.$$
By \cite[(3.6)]{eWZ}  that
$$\sup_{x\in M} \E^x \rr(X_0, X_t)^2\le c t,\ \ t\ge 0$$
holds for some constant $c>0$. So, by Jensen's inequality,  for any $p\in (0,\aa)$, there exists a constant $c_1>0$ such that
$$  \sup_{x\in M} \E^x [\rho(X^B_{0},X^B_r)^p]=\sup_{x\in M}\E^x[\rho(X_0,X_{S^B_r})^p] \leq c^{p/2} \E \big[(S_r^B)^{\frac{p}{2}}\big]\leq c_1 r^{\frac{p}{2\alpha}},\ \ r\in [0,1],$$
where the last step follows from \eqref{LT} and $B\in \BB_\aa$ from which we find      constants $c_2,c_3>0$ such that for $\vv:=\ff p 2$,
\beg{align*} &\E\big[ (S_r^B)^{\vv }\big]= \ff \vv {\GG(1-\vv)} \int_0^\infty (1-\e^{-r B(t)})t^{-\vv-1}\d t \\
&\le c_2 \int_0^\infty (1-\e^{-c_2 r-c_2 r t^\aa}) t^{-\vv-1}\d t \le c_2 \e^{c_2 r} \int_0^\infty (1-\e^{-c_2 r t^\aa}) t^{-\vv-1}\d t\le c_3 r^{\ff\vv\aa},\ \ r\in [0,1].
\end{align*}
Therefore, there exists a constant $c_4>0$ such that
\beq\label{NN1}\sup_{x\in M} \E^x \big[ \W_p(\mu_t^B,\mu_N^B)\big]\le c_4 (t N^{-1})^{\ff p {2\aa}},\ \ t\ge 1, N\in \mathbb N. \end{equation}

On the other hand,  since $M$ is compact, there exists a constant $c_5>0$ such that
$$\mu(\{\rr(x,\cdot)^p \le r\}) \le c_5r^{\ff d p},\ \ r>0, x\in M.$$
By \cite[Proposition~4.2]{eK}, this implies
$$\W_p(\mu_N^B,\mu)\geq c_6 N^{-\frac{p}{d}},\quad N\in\mathbb{N},t\geq 1$$
for some constant $c_6>0$.
This and \eqref{NN1} yield
\begin{align*}
&\inf_{x\in M} \E^x[\W_p (\mu,\mu_t^B)]  \geq \inf_{x\in M} \E^x[\W_p(\mu,\mu_N^B)] -\sup_{x\in M} \E^x[\W_p(\mu_t^B,\mu_N^B)]\\
&\geq c_6 N^{-\frac{p}{d}} - c_4(tN^{-1})^{\ff p{2\aa}},  \ \ t\geq 1, N\in \mathbb N.
\end{align*}
By taking  $N:=\inf\{n\in\mathbb N: n\ge \dd t^{\ff{d}{d-2\aa}}\}$  for small $\dd>0$, find a constant $c_7>0$ such that for large enough $t>1,$
$$\inf_{x\in M} \E^x[\W_p(\mu,\mu_t^B)]  \ge c_7 t^{\ff p{d-2\aa}}.$$
Hence,  the desired estimate holds.

\subsection{Proof of Theorem \ref{T1.1}(3)}
We only consider the case that    $\alpha=\frac{1}{2},d=3$, since the proof for $\aa=1$ and $d=4$ has been presented in \cite{eWZ}.   In this case, the assertion
  is implied by the following two lemmas   which  essentially due to \cite{eWZ} for $\aa=1$.

\begin{lem}\label{L1}
Let $B(\ll)= \ll^{\ff 1 2} $ and  $d=3$.  If   for any  constant $C>1$  there exist constants $\gg,\varepsilon,  t_0>0$, such that
\begin{equation}\label{pop511}
  \{\E^\nu\W_1(\mu_{t,t^{-\gg}}^B,\mu)\}^2\geq\varepsilon\E^\nu\mu(|\nabla(-L)^{-1}(f_{t,t^{-\gg}}-1)|^2),\quad \nu\in \scr P(C),   t>t_0,
\end{equation}
then the estimate in Theorem $\ref{T1.1}(3)$ holds.
\end{lem}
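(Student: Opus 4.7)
\emph{Proof plan.} The plan is to chain three ingredients: the hypothesis \eqref{pop511} converts the desired $\W_1$ lower bound into a spectral one, an asymptotic evaluation of that spectral sum produces the $\log t$ factor in the borderline case $d=2(1+\aa)=3$, and Lemma \ref{LYP} combined with the triangle inequality then passes the estimate from modified empirical measures with regular initial distributions to $\mu_t^B$ with arbitrary starting points. Fix $C:=\aa_{\vv_0}$ from Lemma \ref{LYP} for some small $\vv_0>0$, let $\gamma,\varepsilon,t_0>0$ be the constants provided by the hypothesis for this $C$, and set $r:=t^{-\gamma}$. Since $\mu(|\nabla L^{-1}(f_{t,r}-1)|^2)=\Xi_r(t)/t$ and $\ll_iB(\ll_i)=\ll_i^{3/2}$, Lemma \ref{ee:lem2} implies uniformly in $\nu\in\scr P(C)$ that
$$\E^\nu\mu(|\nabla L^{-1}(f_{t,r}-1)|^2)=\frac{1}{t}\sum_{i=1}^\infty\frac{2\,\e^{-2r\ll_i}}{\ll_i^{3/2}}+O\!\left(\frac{\log t}{t^2}\right).$$
The Weyl bound \eqref{*2} gives $\ll_i^{3/2}\asymp i$, so comparison with $\int_1^\infty u^{-1}\e^{-2ru}\,\d u\sim\log(1/r)$ shows that the sum is $\asymp\gamma\log t$; combined with the hypothesis, this yields $(\E^\nu\W_1(\mu_{t,r}^B,\mu))^2\ge c_1\log t/t$ for some $c_1>0$, uniformly in $\nu\in\scr P(C)$ and all $t\ge t_1$.

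The triangle inequality together with $\W_1\le\W_2$ and \eqref{lem310} gives
$$\E^\nu\W_1(\mu_t^B,\mu_{t,r}^B)\le\sqrt{\E^\nu\W_2(\mu_t^B,\mu_{t,r}^B)^2}\le\sqrt{cCr}=O(t^{-\gamma/2}),$$
which for $\gamma>1$ is $o(\sqrt{\log t/t})$, so $\E^\nu\W_1(\mu_t^B,\mu)\ge(c_1/4)^{1/2}\sqrt{\log t/t}$ uniformly in $\nu\in\scr P(C)$ for large $t$. Applying Lemma \ref{LYP} with $p=1$ then yields $\inf_{x\in M}\E^x\W_1(\mu_t^B,\mu)\ge c_2\sqrt{\log t/t}$ for large $t$, since the correction term $\vv_0 D/t$ is $o(\sqrt{\log t/t})$; squaring this gives the assertion of Theorem \ref{T1.1}(3) in the present case.

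The delicate point is the spectral asymptotic at the critical dimension: for $d<2(1+\aa)$ the series $\sum_i(\ll_iB(\ll_i))^{-1}\e^{-2r\ll_i}$ remains bounded as $r\downarrow 0$, but for $d=2(1+\aa)$ it diverges exactly like $\log(1/r)$, which is both the origin of the $\log t$ in the final rate and the reason the strategy fails in sub-critical dimension. A secondary compatibility issue is that the triangle-inequality step needs $\gamma>1$, so that the smoothing price $\sqrt r=t^{-\gamma/2}$ paid via Lemma \ref{lemma31} is asymptotically negligible against $\sqrt{\log t/t}$; this is an implicit constraint on whatever $\gamma$ is produced when \eqref{pop511} is established.
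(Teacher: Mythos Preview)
Your argument has a genuine gap at the step where you pass from $\mu_{t,r}^B$ to $\mu_t^B$ via the triangle inequality and Lemma~\ref{lemma31}. As you correctly note, that step requires $\gamma>1$ so that the smoothing cost $\sqrt{r}=t^{-\gamma/2}$ is $o(\sqrt{\log t/t})$. But the lemma as stated assumes only that \emph{some} $\gamma>0$ satisfies \eqref{pop511}, and in the companion Lemma~\ref{L2} the hypothesis is verified only for $\gamma\in(0,\tfrac{2}{5})$; the constraint $\gamma<\tfrac{2}{5}$ there is forced by the estimate $I_2\le c_8 t^{\frac{5}{2}\gamma-3}\log t$ needed for \eqref{WWN}. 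So your ``implicit constraint'' is in fact incompatible with the only available verification of \eqref{pop511}, and the proof as written does not establish the lemma.

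The paper avoids this difficulty entirely by replacing the triangle-inequality step with the pointwise heat-flow expansion bound \eqref{WW0}: since $\mu_{t,r}^B=\mu_t^B P_r$ and $\partial M=\emptyset$ in the setting of Theorem~\ref{T1.1}(3), one has $\W_1(\mu,\mu_{t,r}^B)\le \e^{Kr}\W_1(\mu,\mu_t^B)$ almost surely, hence
\[
\{\E^\nu\W_1(\mu_t^B,\mu)\}^2\ge \e^{-2Kt^{-\gamma}}\{\E^\nu\W_1(\mu_{t,t^{-\gamma}}^B,\mu)\}^2,
\]
and the prefactor $\e^{-2Kt^{-\gamma}}\to 1$ for \emph{every} $\gamma>0$. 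This is a multiplicative rather than additive comparison, so no competition between $r$ and the target rate arises. The rest of your argument---the reduction via Lemma~\ref{LYP} with $p=1$, the use of Lemma~\ref{ee:lem2} for the spectral sum, and the Weyl asymptotic $\sum_i \ll_i^{-3/2}\e^{-2r\ll_i}\asymp\log(1/r)$---matches the paper and is correct.
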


\begin{proof}  By Lemma \ref{LYP} for $p=1$, it suffices to  prove that for any constant $C>1$,
\begin{equation}\label{pop513}
\liminf_{t\to\infty}t(\log t)^{-1}\inf_{\nu\in \scr P(C)}\{\E^\nu \W_1(\mu_t^B,\mu)\}^2>0.
\end{equation}
By   \eqref{ee:equ01} and \eqref{pop511}, there exists a constant $c_1, t_1>0$ such that
$$\inf_{\nu\in \scr P(C)}\{\E^\nu\W_1(\mu_{t,t^{-\gg}}^B,\mu)\}^2\geq \frac{c_1}{t}\sum_{i=1}^\infty\frac{1}{\lambda_i^{\frac{3}{2}}\e^{2t^{-\gg} \lambda_i}},\ \   t>t_1.$$
Since $d=3$,  \eqref{*2} implies $\ll_i\le c i^{\ff 2 3} $ for some constant $c>0$, so that we find constants $c_2,c_3>0$ such that
$$\inf_{\nu\in \scr P(C)}\{\E^\nu\W_1(\mu_{t,t^{-\gg}}^B,\mu)\}^2\geq\frac{1}{c_2 t} \int_1^\infty \ff {\d s} {s \e^{c_2t^{-\gg} s^{\frac 23}}} \ge \ff{c_3\log t }t,\ \   t>t_1.$$
 Combining this with  \eqref{WW0}, we find a constant $c_4>0$ such that
 $$\inf_{\nu\in \scr P(C)}\{\E^\nu\W_1(\mu_t^B,\mu)\}^2\geq  \ff{ c_4\e^{-2 Kt^{-\gg}} \log t}{t},\ \ \  t>t_1.$$
 This implies  \eqref{pop513}.
\end{proof}

\begin{lem}\label{L2} Let $M=\TO^3, V=0$ and $B(\ll)=\ll^{\ff 1 2}. $ Then for any $\gg\in (0, \ff 2 5)$ there exist    constants $\vv, t_0>0$ such that
 \begin{equation}\label{NB}
  \{\E^\nu\W_1(\mu_{t,t^{-\gg}}^B,\mu)\}^2\geq\varepsilon\E^\nu\mu(|\nabla(-\DD)^{-1}(f_{t,t^{-\gg}}-1)|^2),\quad \nu\in \scr P,   t>t_0.\end{equation}
 \end{lem}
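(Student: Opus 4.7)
Set $\tilde h:=(-\DD)^{-1}(f_{t,r}-1)$ with $r:=t^{-\gg}$. On $\TO^3$ with $V=0$ the eigenbasis $\{\phi_k\}_{k\ne 0}$ satisfies $\|\phi_k\|_\infty\le 1$ and $\|\nabla\phi_k\|_\infty\asymp\sqrt{\ll_k}$, and by \eqref{MTR}
$$\tilde h\;=\;\frac{1}{\sqrt t}\sum_{k\ne 0}\frac{\e^{-r\ll_k}}{\ll_k}\,\psi_k(t)\,\phi_k.$$

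My first step is Kantorovich--Rubinstein duality: $\W_1(\mu_{t,r}^B,\mu)=\sup\{\int g\,d(\mu_{t,r}^B-\mu):\|\nabla g\|_\infty\le 1\}$. Testing against the $1$-Lipschitz function $g:=\tilde h/\|\nabla\tilde h\|_\infty$ and integrating by parts (clean, since $\pp\TO^3=\emptyset$), and using $-\DD\tilde h=f_{t,r}-1$, one arrives at the pointwise inequality
$$\W_1(\mu_{t,r}^B,\mu)\cdot\|\nabla\tilde h\|_\infty\;\ge\;\mu(|\nabla\tilde h|^2).$$
This is the $\TO^3$ analogue of the KR reduction used for $\aa=1$, $d=4$ in \cite{eWZ}.

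My second step is to control moments of the random field $\nabla\tilde h$. Combining Lemma~\ref{lem34} (with $\aa=\tfrac12$, $d=3$) with a Sobolev embedding $W^{s,2}(\TO^3)\hookrightarrow L^\infty$ for $s>\tfrac32$ and the eigenvalue asymptotics \eqref{*2}, one obtains bounds of the form $\E^\nu\|\nabla\tilde h\|_\infty^{2p}\le C(p,s)\,t^{\kappa(p,s,\gg)}$ for suitable $p$, while a second-moment computation on the expansion of $\mu(|\nabla\tilde h|^2)=t^{-1}\sum_k\e^{-2r\ll_k}\ll_k^{-1}|\psi_k(t)|^2$ shows that $\mu(|\nabla\tilde h|^2)$ concentrates around its mean $\E^\nu\mu(|\nabla\tilde h|^2)\asymp\gg\log t/t$.

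The third step combines the pieces: on the high-probability event $\{\mu(|\nabla\tilde h|^2)\ge\tfrac12\E^\nu\mu(|\nabla\tilde h|^2)\}$ the pointwise bound reads $\W_1\ge c\,\E^\nu\mu(|\nabla\tilde h|^2)/\|\nabla\tilde h\|_\infty$; taking expectation and applying H\"older's inequality together with the compactness estimate $\W_1\le D$ produces the desired $(\E^\nu\W_1)^2\ge\vv\E^\nu\mu(|\nabla\tilde h|^2)$ for some $\vv=\vv(\gg)>0$. The restriction $\gg<\tfrac25$ is precisely what forces the exponents $(p,s,\gg)$ in Step~2 into a range where $\kappa(p,s,\gg)$ balances against the $\log t/t$ scaling of $\E^\nu\mu(|\nabla\tilde h|^2)$, giving a $t$-uniform constant.

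The main obstacle is the sharp sup-norm control of $\nabla\tilde h$: a direct Sobolev estimate loses polynomial factors in $r^{-1}=t^\gg$, which must be absorbed by the moment cancellations encoded in Lemma~\ref{lem34}. Matching these exponents against $\E^\nu\mu(|\nabla\tilde h|^2)\asymp\gg\log t/t$ and verifying that the resulting constant $\vv$ is independent of $t$ is the delicate point, and it is exactly this calculation that forces $\gg<\tfrac25$.
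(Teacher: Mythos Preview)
Your approach has a genuine gap in Steps~2--3: normalizing by $\|\nabla\tilde h\|_\infty$ is too lossy to yield a $t$-uniform constant $\varepsilon$. From the pointwise bound $\W_1\cdot\|\nabla\tilde h\|_\infty\ge\mu(|\nabla\tilde h|^2)$, any route to $(\E^\nu\W_1)^2\ge\varepsilon\,\E^\nu\mu(|\nabla\tilde h|^2)$ forces, after concentration, an estimate of the type $\|\nabla\tilde h\|_\infty\lesssim\sqrt{\E^\nu\mu(|\nabla\tilde h|^2)}\asymp\sqrt{\log t/t}$ with high probability. But the Sobolev embedding $H^s(\TO^3)\hookrightarrow L^\infty$, $s>\tfrac32$, together with $\E|\psi_k(t)|^2\asymp\ll_k^{-1/2}$, gives only $\E\|\nabla\tilde h\|_\infty^2\lesssim t^{-1}r^{-s}=t^{-1+s\gg}$, i.e.\ $\|\nabla\tilde h\|_\infty$ of order $t^{-1/2+3\gg/4+}$ at best, which exceeds $t^{-1/2}\sqrt{\log t}$ by a genuine power of $t$ for every $\gg>0$. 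Lemma~\ref{lem34} controls only individual moments $\E|\psi_i(t)|^{2p}$ and supplies no cancellation for the coherent sum defining $\|\nabla\tilde h\|_\infty$; even the sharp Gaussian-field heuristic would give $\|\nabla\tilde h\|_\infty\sim t^{-1/2}\log t$, still too large by $\sqrt{\log t}$. So no admissible choice of $(p,s,\gg)$ closes the gap, and the claim that ``$\gg<\tfrac25$ is precisely what forces the exponents into a range where $\kappa(p,s,\gg)$ balances'' does not hold for this sup-norm scheme.

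The paper avoids the sup-norm entirely. It imports from \cite{eWZ} the refined Kantorovich lower bound
\[
\W_1(\mu_{t,r}^B,\mu)\;\ge\;\beta^{-1}\mu(|\nabla f_t|^2)-K_1\beta^{-3}\mu(|\nabla f_t|^4),\qquad\beta>0,
\]
takes $\E^\nu$, and then optimizes $\beta$ \emph{deterministically} as a multiple of $\big(\E^\nu\mu(|\nabla f_t|^2)\big)^{1/2}$, after establishing the key fourth-moment comparison $\E^\nu\mu(|\nabla f_t|^4)\le K_2\big[\E^\nu\mu(|\nabla f_t|^2)\big]^2$. The latter is the real work: it is carried out by explicit Fourier analysis on $\TO^3$, writing $\E^\nu\mu(|\nabla f_t|^4)$ as a sum over $m_1+m_2+m_3+m_4=0$ and splitting into a diagonal part $I_1\lesssim(t^{-1}\log t)^2$ and an off-diagonal part $I_2\lesssim t^{\frac52\gg-3}\log t$. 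The restriction $\gg<\tfrac25$ is exactly what makes $I_2=o\big((t^{-1}\log t)^2\big)$; it emerges from this $L^4$ computation, not from any sup-norm balance. Replacing your $L^\infty$ normalization by this $L^4$-based correction is the missing idea.
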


\beg{proof}  The proof is similar to that of \cite[Proposition 5.3]{eWZ} with $X_t^B$ replacing $X_t$.

Let $f_t=(-\Delta)^{-1}(f_{t,t^{-\gamma}}-1)$. It is shown in the proof of \cite[Proposition 5.3]{eWZ} that
$$ \W_1(\mu_{t,t^{-\gamma}}^B,\mu)\geq\beta^{-1} \mu(|\nabla f_t|^2)- K_1\beta^{-3}\mu(|\nabla f_t|^4),\quad \beta>0$$
holds for some   constant $K_1>0$.
If there exist a constant $K_2>0$ such that
\begin{equation}\label{WWN}
\E^\nu\mu(|\nabla f_t|^4)\leq K_2[\E^\nu\mu(|\nabla f_t|^2)]^2,\quad t\geq 2,
\end{equation}
then
$$\E^\nu\W_1(\mu_{t,t^{-\gamma}}^B,\mu)\geq\beta^{-1}\E^\nu\mu(|\nabla f_t|^2)-\beta^{-3}K_1K_2[\E^\nu\mu(|\nabla f_t|^2)]^2,\,\quad \beta>0.$$
Taking $\beta=N\E^\nu[\mu(|\nabla f_t|^2)^{\frac{1}{2}}]$ for large enough $N>1$, we prove \eqref{NB} for some constant $c>0$.
So, it remains to prove \eqref{WWN}.

We identify $\TO$ with $[0,2\pi)$ by the one-to-one map
$$[0,2\pi)\ni s\mapsto \e^{\i s},$$
where i is the imaginary unit. In this way, a point in $\TO^3$ is regarded as a point in $[0,2\pi)^3$, so that $\{\e^{\i\langle m,\cdot\rangle}\}_{m\in\Z^3}$ consist of an eigenbasis of $\Delta$ in the complex $L^2$-space of $\mu$, where $\mu$ is the normalized volume measure on $\TO^3$.  Since $X_t^B$ is generated by $-(-\DD)^{\ff 1 2}$, we have
\beq\label{EG} \E^x \e^{{\rm i} \<m,  X_t^B\>}= \e^{-|m| t}\e^{{\rm i}\<m,x\>},\ \ t\ge 0, x\in \TO^3, m\in \Z^3.\end{equation}
Moreover,
$$f_t:=(-\Delta)^{-1}(f_{t,t^{-\gamma}}-1)=\sum_{m\in\Z^{3}\backslash\{0\}}b_m \e^{-\textnormal{i}\langle m,\cdot\rangle},$$ where
\beq\label{BM}   b_m:=\frac{\e^{-|m|^2 t^{-\gamma}}}{|m|^2t}\int_0^t \e^{\textnormal{i}\langle m,X^B_s\rangle}\,\d s,\ \ m\in \Z^3.\end{equation}
Then
$$|\nabla f_t(x)|^2=-\sum_{m_1,m_2\in \Z^3\backslash\{0\}}\langle m_1,m_2\rangle b_{m_1}b_{m_2}\e^{-\textnormal{i}\langle m_1+m_2,x\rangle},$$
$$|\nabla f_t(x)|^4=\sum_{m_1,\cdots, m_4\in \Z^3\backslash\{0\}}\langle m_1,m_2\rangle\langle m_3,m_4\rangle b_{m_1}b_{m_2}b_{m_3}b_{m_4}\e^{-\textnormal{i}\langle m_1+m_2+m_3+m_4,x\rangle}.$$
Noting that, $\mu(\e^{-\textnormal{i}\langle m,\cdot\rangle})=0$ when $m\neq 0$, we get
\begin{equation}\label{pop532}
\E^\nu\mu(|\nabla f_t|^2)=\sum_{m\in\Z^3\backslash\{0\}}|m|^2\E^{\nu}[b_m b_{-m}],
\end{equation}
\begin{equation}\label{pop533}
\E^\nu\mu(|\nabla f_t|^4)=\sum_{(m_1,m_2,m_3,m_4)\in\Ss}\langle m_1,m_2 \rangle\langle m_3,m_4 \rangle\E^{\nu}[b_{m_1}b_{m_2}b_{m_3}b_{m_4}],
\end{equation}
where ~$\Ss:=\{(m_1,m_2,m_3,m_4)\in\Z^3\backslash\{0\}:m_1+m_2+m_3+m_4=0\}$.~

By \eqref{BM}, we have
$$\E^\nu[b_m b_{-m}]=\frac{\e^{-2|m|^2 t^{-\gamma}}}{|m|^4 t^2}\int_{[0,t]^2}\E^\nu \e^{\textnormal{i}\langle m,X^B_{s_2}-X^B_{s_1}\rangle}\,\d s_1\d s_2.$$
The Markov property and $\eqref{EG}$ yield
\begin{equation}\label{pop534}
\E^\nu(\e^{\textnormal{i}\langle m,X^B_{s_2}-X^B_{s_1}\rangle}|\mathscr{F}_{s_1\wedge s_2})=\e^{-|m||s_1-s_2|},\ \ s_1,s_2\ge 0.
\end{equation}
Then we   find a constant $\kappa>0$  such that
$$\E^\nu[b_m b_{-m}]=\frac{\e^{-2|m|^2 t^{-\gamma}}}{|m|^4 t^2}\int_{[0,t]^2}\e^{-|m||s_1-s_2|}\,\d s_1\d s_2\geq\frac{\kappa \e^{-2|m|^2 t^{-\gamma}}}{|m|^5 t},\quad t\geq 2.$$
Using this and \eqref{pop532}, we get that
\begin{equation}\label{pop535}\begin{split}
&\E^\nu\mu(|\nabla f_t|^2)\geq\sum_{m\in\Z^3\backslash\{0\}}\frac{\kappa \e^{-2|m|^2t^{-\gamma}}}{|m|^3 t}\geq\frac{\kappa_1}{t}\int_1^\infty\frac{\e^{-2s^2 t^{-\gamma}}}{s}\,\d s\\
&\geq\frac{\kappa_1}{t \e^2}\int_1^{t^{\frac{\gamma}{2}}}s^{-1}\,\d s=\frac{\kappa_1\gamma}{2\e^2}(t^{-1}\log t),\quad t\geq 2.
\end{split}\end{equation}

Let $\mathbf{S}$ be the set of all the permutations of $\{1,2,3,4\}$, $D(t)=\{(s_1,s_2,s_3,s_4)\in[0,t]^4:0\le s_1\le s_2\le s_3\le s_4 \le t\}$. We have
\begin{align*}
&\E^\nu[b_{m_1}b_{m_2}b_{m_3}b_{m_4}]\\
&=\frac{\e^{-\sum_{p=1}^4|m_p|^2 t^{-\gamma}}}{t^4\prod_{p=1}^4|m_p|^2}\int_{[0,t]^4}\E^\nu[\e^{\i\langle m_1,X^B_{s_1}\rangle}\e^{\i\langle m_2,X^B_{s_2}\rangle}\e^{\i\langle m_3,X^B_{s_3}\rangle}\e^{\i\langle m_4,X^B_{s_4}\rangle}]\,\d s_1\d s_2\d s_3\d s_4\\
&=\frac{\e^{-\sum_{p=1}^4|m_p|^2 t^{-\gamma}}}{t^4\prod_{p=1}^4|m_p|^2}\sum_{(i,j,k,l)\in\mathbf{S}}\int_{D(t)}\E^\nu[\e^{\i\langle m_i,X^B_{s_1}\rangle}\e^{\i\langle m_j,X^B_{s_2}\rangle}\e^{\i\langle m_k,X^B_{s_3}\rangle}\e^{\i\langle m_l,X^B_{s_4}\rangle}]\,\d s_1\d s_2\d s_3\d s_4
\end{align*}
Since $m_1+m_2+m_3+m_4=0$, by \eqref{EG} and the Markov property we obtain
\begin{equation*}\begin{split}
\E^\nu[\e^{\textnormal{i}\langle m_i,X^B_{s_1}\rangle}\e^{\textnormal{i}\langle m_j,X^B_{s_2}\rangle}\e^{\textnormal{i}\langle m_k,X^B_{s_3}\rangle}\e^{\textnormal{i}\langle m_l,X^B_{s_4}\rangle}]=\e^{-|m_l|(s_4-s_3)-|m_l+m_k|(s_3-s_2)-|m_i|(s_2-s_1)}.
\end{split}\end{equation*}
Thus,
\begin{equation}\label{pop536}\begin{split}
&\frac{t^4\prod_{p=1}^4|m_p|^2}{\e^{-\sum_{p=1}^4|m_p|^2 t^{-\gamma}}}\E^\nu[b_{m_1}b_{m_2}b_{m_3}b_{m_4}]\\
&=\sum_{(i,j,k,l)\in\mathbf{S}}\int_{D(t)}\e^{-|m_l|(s_4-s_3)-|m_l+m_k|(s_3-s_2)-|m_i|(s_2-s_1)}\,\d s_1\d s_2\d s_3\d s_4.
\end{split}\end{equation}
If $m_l+m_k=0$, then
\begin{equation*}\begin{split}
&\int_{D(t)}\e^{-|m_l|(s_4-s_3)-|m_l+m_k|(s_3-s_2)-|m_i|(s_2-s_1)}\,\d s_1\d s_2\d s_3\d s_4\\
&=\int_0^t\int_{s_1}^t\int_{s_2}^t\int_{s_3}^t \e^{-|m_l|(s_4-s_3)}\e^{-|m_i|(s_2-s_1)}\,\d s_4\d s_3\d s_2\d s_1\leq\frac{t^2}{|m_i||m_l|}.
\end{split}\end{equation*}
If $m_l+m_k\neq0$, then
\begin{equation*}\begin{split}
&\int_{D(t)}\e^{-|m_l|(s_4-s_3)-|m_l+m_k|(s_3-s_2)-|m_i|(s_2-s_1)}\,\d s_1\d s_2\d s_3\d s_4\\
&=\int_0^t\int_{s_1}^t\int_{s_2}^t\int_{s_3}^t \e^{-|m_l|(s_4-s_3)}\e^{-|m_l+m_k|(s_3-s_2)}\e^{-|m_i|(s_2-s_1)}\,\d s_4\d s_3\d s_2\d s_1\\
&\leq\frac{t}{|m_i||m_l+m_k||m_l|}.
\end{split}\end{equation*}
Combining these with  \eqref{pop536} leads to
$$\E^\nu[b_{m_1}b_{m_2}b_{m_3}b_{m_4}]\leq \frac{\e^{-\sum_{p=1}^4|m_p|^2 t^{-\gamma}}}{\prod_{p=1}^4|m_p|^2}\sum_{(i,j,k,l)\in\mathbf{S}}\left\{\frac{t^{-2}1_{\{m_l+m_k=0\}}}{|m_i||m_l|}+\frac{t^{-3}1_{\{m_l+m_k\neq 0\}}}{|m_i||m_l+m_k||m_l|}\right\}.$$
Therefore,   by \eqref{pop533}, we find  a constant $c>0$  such taht
\begin{equation}\label{pop537}
\E^\nu\mu(|\nabla f_t|^4)\leq c(I_1+I_2),\quad t\geq 2,
\end{equation}
holds for
$$I_1:=\frac{1}{t^2}\sum_{a,b\in\Z^3\backslash\{0\}}\frac{1}{|a|^3|b|^3}\e^{-2(|a|^2+|b|^2)t^{-\gamma}},$$
$$I_2:=\frac{1}{t^3}\sum_{\substack{m_1,m_2,m_3,m_4\in\Z^3\backslash\{0\}\\m_3+m_4\neq 0 }}\frac{\e^{-\sum_{p=1}^4|m_p|^2 t^{-\gamma}}}{|m_1|^2|m_2||m_3||m_3+m_4||m_4|^2}.$$
It is easy to see that there exists constants $c_1,c_2>0$, such that
\begin{equation}\label{pop538}
I_1\leq\frac{c_1}{t^2}\left(\int_1^\infty\frac{\e^{-2s^2 t^{-\gamma}}}{s}\,\d s\right)^2\leq c_2(t^{-1}\log t)^2,\quad  t\geq 2,
\end{equation}
and similarly
$$\sum_{m\in\Z^3\backslash\{0\}}\frac{\e^{-|m|^2t^{-\gamma}}}{|m|^2}\leq c_2t^{\frac{\gamma}{2}},\quad \sum_{m\in\Z^3\backslash\{0\}}\frac{\e^{-|m|^2t^{-\gamma}}}{|m|}\leq c_2t^{\gamma},\quad t\geq 2,$$
Then by reformulating $I_2$ as
$$I_2=\frac{1}{t^3}\left(\sum_{m_1\in\Z^3\backslash\{0\}}\frac{\e^{-|m_1|^2t^{-\gamma}}}{|m_1|^2}\right)\left(\sum_{m_2\in\Z^3\backslash\{0\}}\frac{\e^{-|m_2|^2 t^{-\gamma}}}{|m_2|}\right)
\sum_{\substack{m_3,m_4\in\Z^3\backslash\{0\}\\m_3+m_4\neq 0 }}\frac{\e^{-(|m_3|^2 +|m_4|^2)t^{-\gamma}}}{ |m_3||m_3+m_4||m_4|^2},$$
we find a constant $c_3>0$ such that
\begin{equation}\label{pop539}
I_2 \leq c_3^2 t^{\frac{3\gamma}{2}-3}\sum_{m_4\in\Z^3\backslash\{0\}}\frac{\e^{-|m_4|^2t^{-\gamma}}}{|m_4|^2}
\sum_{m_3\in\Z^3\backslash\{0,-m_4\}}\frac{\e^{-|m_3|^2 t^{-\gamma}}}{|m_3||m_3+m_4|}.
 \end{equation}
Write
\begin{equation}\label{pop540}\begin{split}
\sum_{m_3\in\Z^3\backslash\{0,-m_4\}}\frac{\e^{-|m_3|^2 t^{-\gamma}}}{|m_3||m_3+m_4|}=:J_1+J_2+J_3
\end{split}\end{equation}
for
$$J_1:=\sum_{\substack{m_3\in\Z^3\backslash\{0,-m_4\}\\|m_3|\leq\frac{|m_4|}{2}}}\frac{\e^{-|m_3|^2 t^{-\gamma}}}{|m_3||m_3+m_4|},$$
$$J_2:=\sum_{\substack{m_3\in\Z^3\backslash\{0,-m_4\}\\\frac{|m_4|}{2}<|m_3|\leq 2|m_4|}}\frac{\e^{-|m_3|^2 t^{-\gamma}}}{|m_3||m_3+m_4|},$$
$$J_3:=\sum_{\substack{m_3\in\Z^3\backslash\{0,-m_4\}\\|m_3|> 2|m_4|}}\frac{\e^{-|m_3|^2 t^{-\gamma}}}{|m_3||m_3+m_4|}.$$
On the region $\{m_3\in\Z^3\backslash\{0,-m_4\}:|m_3|\leq\frac{|m_4|}{2}\}$  we   find a constant $c_4>0$ such that
\begin{equation}\label{pop541}
J_1\leq\frac{2}{|m_4|}\sum_{m_3\in\Z^3\backslash\{0\}}\frac{\e^{-|m_3|^2t^{-\gamma}}}{|m_3|}\leq\frac{c_4 t^\gamma}{|m_4|},\quad t\geq2.
\end{equation}
Next, on the region $\{m_3\in\Z^3\backslash\{0,-m_4\}:|m_3|>2|m_4|\}$, we have $|m_3+m_4|\sim |m_3|$ and $|m_3|^2\geq\frac{|m_3|^2}{2}+2|m_4|^2$, so we find a  a constant $c_5>0$ such that
$$J_3\leq 4\sum_{\substack{m_3\in\Z^3\backslash\{0\}\\|m_3|> 2|m_4|}}\frac{\e^{-|m_3|^2t^{-\gamma}}}{|m_3|^2}\leq 4 \e^{-2|m_4|^2t^{-\gamma}}\sum_{m_3\in\Z^3\backslash\{0\}}\frac{\e^{-\frac{|m_3|^2t^{-\gamma}}{2}}}{|m_3|^2}\leq c_5 t^{\frac{\gamma}{2}}\e^{-2|m_4|^2t^{-\gamma}}.$$
This together with  $\e^{-s}\leq s^{-\ff 1 2}$ gives
\begin{equation}\label{pop542}
J_3\leq\frac{c_5t^{\gamma}}{|m_4|},\quad t\geq 2.
\end{equation}
Finally, on the region $\{m_3\in\Z^3\backslash\{0,-m_4\}:\frac{|m_4|}{2}<|m_3|\leq2|m_4|\}$, we have $|m_3|\sim|m_4|$ and $1\leq|m_3+m_4|\leq 3|m_4|$, so that  there for a constant $c_6>0$
$$J_2\leq\frac{2\e^{-\frac{|m_4|^2t^{-\gamma}}{4}}}{|m_4|}\sum_{1\leq|m_3+m_4|\leq 3|m_4|}\frac{1}{|m_3+m_4|}\leq c_6|m_4| \e^{-\frac{|m_4|^2 t^{-\gamma}}{4}}.$$
By $\e^{-s}\leq s^{-1}$, we get the upper estimate of $J_2$,
$$J_2\leq\frac{c_7 t^{\gamma}}{|m_4|},\quad t\geq 2.$$
Combining this with \eqref{pop539},\eqref{pop540},\eqref{pop541} and \eqref{pop542}, we find a constant $c_8>0$ such that
$$I_2\leq c_8t^{\ff 5 2 \gamma-3}\log t,\quad t\geq 2.$$
Substituting this and \eqref{pop538} into \eqref{pop537}, and combining with \eqref{pop535}, we prove \eqref{WWN}. The proof is finished.

\end{proof}

\section{Proof of Theorem \ref{T1.2}}

(1)  By Lemma \ref{LYP} for $p=2$,   it suffices to prove
 \beq\label{UPS'}  \limsup_{t\to\infty}\sup_{\nu\in \scr P(C)}\left\{t\E^\nu[\W_2(\mu_t^B,\mu)^2]\right\}\leq \sum_{i=1}^\infty\frac{2}{\lambda_iB(\ll_i)},\ \ C>1.\end{equation}
By the triangle inequality of $\W_2$ and Lemma \ref{lemma31},  for any $\varepsilon>0$ there exists a constant $c(\vv)>0$ such that
\begin{align*}
&\E^\nu[\W_2(\mu_t^B,\mu)^2]\\ &\leq (1+\varepsilon)\E^\nu[\W_2(\mu_{t,r_t,r_t}^B,\mu)^2]+2(1+\varepsilon^{-1})\big\{\E^\nu[\W_2(\mu_{t,r_t}^B,\mu_{t,r_t,r_t}^B)^2]+\E^\nu[\W_2(\mu_t^B,\mu_{t,r_t}^B)^2]\big\}\\
&\leq (1+\varepsilon)\E^\nu[\W_2(\mu_{t,r_t,r_t}^B,\mu)^2]+c(\vv) r_t,
\end{align*}
where $r_t=t^{-\beta},\beta\in(1,\frac{2q}{d}),q\in(\frac{d}{2}\vee 1,\frac{d}{d-2\alpha}),t\geq 1$.
Since   $\frac{d\mu_{t,r_t,r_t}}{d\mu}=(1-r_t)f_{t,r_t}+r_t$, by   combining this with  Lemma \ref{ee:lem1} and  H\"{o}lder's inequality, we obtain
\begin{align*}
&\E^\nu[\W_2(\mu_{t,r_t,r_t}^B,\mu)^2]\leq\E^\nu\int_M\frac{|\nabla L^{-1}(f_{t,r_t}-1)|^2}{\mathscr{M}((1-r_t)f_{t,r_t}+r_t,1)}\,d\mu\\
&\leq\E^\nu\int_M\left\{|\nabla L^{-1}(f_{t,r_t}-1)|^2+|\nabla L^{-1}(f_{t,r_t}-1)|^2|\mathscr{M}((1-r_t)f_{t,r_t}+r_t,1)^{-1}-1|\right\}\,d\mu\\
&\leq\E^\nu\int_M|\nabla L^{-1}(f_{t,r_t}-1)|^2\,d\mu+\left(\E^\nu\int_M|\nabla L^{-1}(f_{t,r_t}-1)|^{2p}\,d\mu\right)^{\frac{1}{p}}\\
& \times\left(\E^\nu\int_M|\mathscr{M}((1-r_t)f_{t,r_t}+r_t,1)^{-1}-1|^{\frac{p}{p-1}}\right)^{\frac{p-1}{p}}.
\end{align*}
Since $B\in \BB^\aa$,  by Lemma \ref{ee:lem2}, Lemma \ref{lem35} and \eqref{lem331}, this implies
$$\limsup_{t\to\infty}\sup_{\nu\in\scr{P} (C)}\left\{t\E^\nu[\W_2(\mu_{t,r_t,r_t}^B,\mu)^2]\right\}\leq\sum_{i=1}^\infty\frac{2}{\ll_i B(\ll_i)}.$$
Combining this with  Lemma \ref{lemma31} for $\vv=r=r_t:=t^{-\bb}$ where $\bb>1$, we prove     \eqref{UPS'}.

 (2) By Lemma \ref{LYP} for $p=2$, it suffices to prove
\beq\label{UPS2}  \limsup_{t\to\infty}\sup_{\nu\in \scr P(C)}\left\{t^{\ff 2{d-2\aa}}\E^\nu[\W_2(\mu_t^B,\mu)^2]\right\}<\infty,\ \ C>1.\end{equation}
  Let $r: (1,\infty)\to (0,1)$ to be determined.  By \cite{L17}, we have
  $$t\W_2(\mu_{t,r}^B,\mu)^2\le 4  \Xi_r(t),\ \ t,r>0.$$
  Combining this with  Lemma~\ref{lemma31} and Lemma~\ref{ee:lem2}, we find a constant $c_0>0$ such that
\begin{equation}\label{ee:sup1}\begin{split}
\E^\nu[\W_2(\mu_t^B,\mu)^2]&\leq 2\E^\nu[\W_2(\mu_t^B,\mu_{t,r_t}^B)^2]+2\E^\nu[\W_2(\mu_{t,r_t}^B,\mu)^2]\\
&\leq c_0 r_t+c_0\frac{\|h_\nu\|_\infty}{t}\sum_{i=1}^\infty\frac{1}{\lambda_i^{1+\alpha}\e^{2r_t\lambda_i}},\quad t>1.
\end{split}\end{equation}
By \eqref{*2}, there exists constants $c_2,c_3>0$ such that
$$\sum_{i=1}^\infty\frac{1}{\lambda_i^{1+\alpha}\e^{2r_t\lambda_i}}\leq    c_2\int_1^\infty s^{-\frac{2(1+\alpha)}{d}}\e^{-c_3 r_t s^{\frac{2}{d}}}\,\d s,$$
so that \eqref{ee:sup1} implies
\beq\label{NNN} \sup_{\nu\in\scr{P}(C)} \E^\nu[\W_2(\mu_t^B,\mu)^2]\le c r_t + \ff c t \int_1^\infty s^{-\frac{2(1+\alpha)}{d}}\e^{-c_3 r_t s^{\frac{2}{d}}}\,\d s,\ \ t> 1, r_t>0\end{equation}
for some constant $c>0$ depending on $C$.

Since $d>2(1+\alpha)$, we find a constant $c_4>0$ such that
$$\int_1^\infty s^{-\frac{2(1+\alpha)}{d}}\e^{-c_3 r_t s^{\frac{2}{d}}}\,\d s=\int_{r_t^\frac{d}{2}}^\infty (r_t^{-\frac{d}{2}}u)^{-\frac{2(1+\alpha)}{d}}\e^{-c_3 u^{\frac{2}{d}}}r_t^{-\frac{d}{2}}\,\d u
\leq c_4 r_t^{-\frac{d-2(1+\alpha)}{2}},\ \ t>1.$$
Combining this with \eqref{NNN} and taking
$$r_t= t^{-\frac{2}{d-2\alpha}},\ \ t>1,$$
we prove \eqref{UPS2}.

(3)  Since $d=2(1+\aa)$,  for any $c>0$ there exists a constant $c_1>0$ such that  there exist a constants $c_1>0$ such that
$$
 \int_1^\infty s^{-\frac{2(1+\alpha)}{d}}\e^{-cr_t s^{\frac{2}{d}}}\,\d s   \leq c_1 \ln(1+r_t^{-1}),\ \ t>1,
$$
so that \eqref{NNN} implies
$$  \E^\mu [\W_2(\mu_t^B,\mu)^2] \le c' r_t+ c' t^{-1} \log (1+r_t^{-1}),\ \ t>1$$
for some constant $c'>0$.
Taking $r_t= t^{-1}\log (1+ t^{-1})$ for $t\ge 2$, we find a constant $c_2>0$ such that
$$  \E^\mu [\W_2(\mu_t^B,\mu)^2] \le c_2 t^{-1}\log (1+t),\  \ t\ge 2.$$
Since $\E^\nu\le \|h_\nu\|_\infty \E^\mu$ for $\nu=h_\nu\mu$, combining this with   Lemma \ref{LYP} for $p=2$ and $\vv=1$, we obtain
$$\limsup_{t\to\infty} \ff{t}{\log t} \sup_{x\in M} \E^x [\W_2(\mu_t^B,\mu)^2] <\infty.$$

\section{Proof of Theorem \ref{T1.4}}
\begin{proof}
By \eqref{N*N}, \eqref{B*B} and noting that  the Markov property implies
$$\P^x(t\W_2(\tt\mu_{\vv,t}^B, \mu)^2<a)= \P^{\nu_{x,\vv}}(t\W_2(\mu_t^B,\mu)^2<a),\ \ a\ge 0,$$
it suffices to prove that for any $C>1$,
\beq\label{XW1} \liminf_{t\to\infty}\inf_{\nu\in \scr P(C)} \P^\nu(t\W_2(\mu_t^B,\mu)^2<a) \ge F(a),\ \ a\ge 0,\end{equation}
\beq\label{XW2} \limsup_{t\to\infty}\sup_{\nu\in \scr P(C)} \P^\nu(t\W_2(\mu_t^B,\mu)^2<a) \le F(a),\ \ a\ge 0.\end{equation}
It is easy to see that \eqref{XW2} follows from  Theorem \ref{T2.1}(2) and \eqref{WW0}.

To prove   \eqref{XW1}, let $\gg>1$ be in Lemma \ref{lem32}(2), and denote
\beg{align*}& \tt\Xi(t):= t \int_M \ff{|\nn L^{-1}(f_{t,t^{-\gg}}-1)|^2}{\scr M((1-t^{-\gg})f_{t,t^{-\gg}}+t^{-\gg}, 1)}\d\mu,\\
&\Xi(t):= \Xi_{t^{-\gg}}(t)= t\mu\big( |\nn L^{-1}(f_{t,t^{-\gg}}-1)|^2\big),\ \ t>1.\end{align*}
 Then Lemma \ref{lem32}(2) and Lemma \ref{lem35} yield
$$   \limsup_{t\to\infty}\sup_{\nu\in \scr P(C)} \P^\nu(|\tt \Xi(t)-\Xi(t)|>\vv)=0,\ \ \vv>0.$$
Combining this with Lemma \ref{ee:lem1}, \eqref{ee:equ01} and noting that $\sum_{i=1}^\infty\ff 1 {\ll_iB(\ll_i)}<\infty,$ we prove \eqref{XW1}.
\end{proof}

\end{document}